\definecolor{red}{rgb}{0.7,0.15,0.15}
\definecolor{green}{rgb}{0,0.5,0}
\definecolor{blue}{rgb}{0,0,0.7}
\makeatletter \@addtoreset{equation}{section}
\newtheorem{theorem}{Theorem}[section]
\newtheorem{lemma}[theorem]{Lemma}
\newtheorem{proposition}[theorem]{Proposition}
\newtheorem{definition}[theorem]{Definition}
\newtheorem{remark}[theorem]{Remark}
\newcommand{\ol}{\overline}
\newcommand{\ba}{\begin{array}}
\newcommand{\ea}{\end{array}}
\newcommand{\be}{\begin{equation}}
\newcommand{\ee}{\end{equation}}
\def \F{\mathbb{F}}
\def \N{\mathbb{N}}
\def \P{\mathbb{P}}
\def \R{\mathbb{R}}
\def\Cc{\mathcal{C}}
\def\Fc{\mathcal{F}}
\def\Pc{\mathcal{P}}
\def\Tc{\mathcal{T}}
\def\f{\mathbf{D}}
\def\f{\mathfrak{D}}
\def\dbE{\mathbb{E}}
\def\dbF{\mathbb{F}}
\def\dbL{\mathbb{L}}
\def\dbP{\mathbb{P}}
\def\dbR{\mathbb{R}}
\def\dbS{\mathbb{S}}
\def\dbQ{\mathbb{Q}}
\def\a{\alpha}
\def\b{\beta}
\def\d{\delta}
\def\e{\varepsilon}
\def\l{\lambda}
\def\m{\mu}
\def\n{\nu}
\def\si{\sigma}
\def\t{\tau}
\def\f{\varphi}
\def\th{\theta}
\def\o{\omega}
\def\cA{{\cal A}}
\def\cB{{\cal B}}
\def\cC{{\cal C}}
\def\cE{{\cal E}}
\def\cF{{\cal F}}
\def\cL{{\cal L}}
\def\cP{{\cal P}}
\def\cR{{\cal R}}
\def\cS{{\cal S}}
\def\cT{{\cal T}}
\def\cW{{\cal W}}
\def\no{\noindent}
\def\q{\quad}
\def\pa{\partial}
\def\bx{{\mathbf{x}}}
\def\bI{{\mathbf{I}}}
\def\bX{{\mathbf{X}}}
\def\btau{{\bm{\tau}}}
\def\bo{{ \bm{\omega}}}
\def\1{{\mathbf{1}}}
\def\bW{{\mathbf{W}}}
\def \bQ{{\mathbf{Q}}}
\newcommand{\smallfont}[1]{\text{\fontsize{4}{4}\selectfont$#1$}}
\newcommand{\tinyfont}[1]{\text{\fontsize{3}{3}\selectfont$#1$}}
\begin{document}

\title{\bf{Mean-field games of optimal stopping: master equation and weak equilibria}}
\author{Dylan Possamaï\footnote{Department of Mathematics, ETH Zurich, Switzerland, dylan.possamaï@math.ethz.ch} \q Mehdi Talbi\footnote{Department of Mathematics, ETH Zurich, Switzerland, mehdi.talbi@math.ethz.ch}}
\date{\today}

\maketitle

\abstract{We are interested in the study of stochastic games for which each player faces an optimal stopping problem. In our setting, the players may interact through the criterion to optimise as well as through their dynamics. After briefly discussing the $N$-player game, we formulate the corresponding mean-field problem. In particular, we introduce a weak formulation of the game for which we are able to prove existence of Nash equilibria for a large class of criteria. We also prove that equilibria for the mean-field problem provide approximated Nash equilibria for the $N$-player game, and we formally derive the master equation associated with our mean-field game.

\medskip
	\noindent {\bf Keywords:} mean-field games, optimal stopping, master equation.
}
\setlength\parindent{0pt}
\section{Introduction}

{
Introduced independently by \citeauthor*{lasry2007mean} \cite{lasry2006jeux,lasry2006jeux2,lasry2007mean} and \citeauthor*{huang2006large} \cite{huang2003individual,huang2006large,huang2007invariance,huang2007nash,huang2007large}, mean-field games are a powerful tool to study decision problems among large populations of interacting agents. More precisely, these models provide an approximation to study $N$-player games of symmetrical agents, where the players interact only through the empirical measure of their states. In the mean-field approximation, the empirical measure is replaced with an arbitrary probability measure, which makes easier to establish the existence of a Nash equilibrium. The latter typically provides an approximated Nash equilibrium for the $N$-player game.

\medskip
If the classical literature on optimal stopping problems for diffusions as well as their game versions is more than abundant - without any claim for comprehensiveness, let us nonetheless mention \citeauthor*{shiryaev1978optimal} \cite{shiryaev1978optimal}, \citeauthor*{el1981aspects} \cite{el1981aspects}, \citeauthor*{peskir2006optimal} \cite{peskir2006optimal}, \citeauthor*{dynkin1969game} \cite{dynkin1969game}, \citeauthor*{dynkin1969theorems} \cite{dynkin1969theorems}, \citeauthor*{bensoussan1974nonlinear} \cite{bensoussan1974nonlinear,bensoussan1977nonzero}, \citeauthor*{bismut1977probleme} \cite{bismut1977probleme}, \citeauthor*{friedman1973stochastic} \cite{friedman1973stochastic}, \citeauthor*{lepeltier1984jeu} \cite{lepeltier1984jeu}, \citeauthor*{ekstrom2006value} \cite{ekstrom2006value}, \citeauthor*{kobylanski2014dynkin} \cite{kobylanski2014dynkin} -, the corresponding one for mean-field games concentrated for a long time only on optimal controls problems. This is only recently that mean-field games of optimal stopping have received a much stronger attention. We first mention \citeauthor*{carmona2017mean} \cite{carmona2017mean}, where such games are considered with common noise and applied to a bank run model. In particular, the authors introduce a weak formulation of the game, which allows for existence and convergence of equilibria. \citeauthor*{nutz2018mean} \cite{nutz2018mean} introduces a special model of games for a continuum of players, for which existence is characterised by using the cumulative density function of the idiosyncratic noise of the players. A similar comprehensive study is conducted by \citeauthor*{nutz2020convergence} \cite{nutz2020convergence} for a specific game of optimal stopping. On the other hand, \citeauthor*{bertucci2018optimal} \cite{bertucci2018optimal} (see also \citeauthor*{bertucci2021monotone} \cite{bertucci2021monotone}) provides an analytic study of a mean-field game of optimal stopping, by considering the coupled Hamilton--Jacobi--Bellman (here an obstacle problem) and Fokker--Planck equations corresponding to  the problem. It is shown that existence cannot be expected in general for pure stopping strategies, but holds under mild assumptions for relaxed ones. In a similar spirit, the papers by \citeauthor*{bouveret2020mean} \cite{bouveret2020mean,bouveret2022technological} and \citeauthor*{dumitrescu2021control} \cite{dumitrescu2021control,dumitrescu2022energy} consider a probabilistic model for mean-field games of relaxed stopping times, namely through the so-called linear programming approach and the use of occupation measures. We also mention \citeauthor*{huang2022class} \cite{huang2022class} where mean-field optimal stopping games with players comparing their stopping rules are introduced, as well as the recent contribution by \citeauthor*{he2023mean} \cite{he2023mean} which proposes a different approach based on a mean-field extension of a representation result for stochastic processes originally due to Bank and El Karoui.

\medskip
This paper represents our take on mean-field games of optimal stopping. In our modelling, each player faces an optimal stopping problem, and both controlled dynamics and criterion to optimise may depend on the other players' states and stopping decisions. We also allow for the following feature: the players may remain in the game after stopping their process. This feature was already present in the works of \citeauthor*{talbi2023dynamic} \cite{talbi2023dynamic, talbi2023viscosity, talbi2022finite} on the optimal stopping of McKean--Vlasov diffusions, but seems new in the context of games. Indeed, in the aforementioned references, players interact only through the distribution of players still in the game. The first main contribution of our paper concerns the master equation for mean-field games of stopping times, which is introduced to characterise Nash equilibria. Whereas this equation has been extensively studied in the context of games of standard controls (see \emph{e.g.} \citeauthor*{carmona2014master} \cite{carmona2014master}, \citeauthor*{cardaliaguet2019master} \cite{cardaliaguet2019master}, \citeauthor*{mou2020wellposedness} \cite{mou2020wellposedness}, \citeauthor*{gangbo2022mean} \cite{gangbo2022mean}, \citeauthor*{delarue2020master} \cite{delarue2020master} and the references therein), we derive it for the first time in the context of games of timing. Although we can hardly expect its solutions to be differentiable, we only consider in this first approach classical solutions, for which we prove a verification result. The research of an appropriate notion of weak solution is left for further research.

\medskip
Our second main contribution is related to the weak formulation of our mean-field game. We show that, in this setting, the notion of stopping strategy used by the players simply corresponds to the one of randomised stopping time, in the sense of \citeauthor*{baxter1977compactness} \cite{baxter1977compactness} and \citeauthor*{meyer1978convergence} \cite{meyer1978convergence}. Our formulation turns out to be particularly convenient to prove existence of equilibria under very mild requirements---for example, for a large class of so-called `time-inconsistent' problems---and provides approximated equilibria for the corresponding finite population game. {\color{black} Also, it is worth mentioning that our notion of relaxed strategy---through randomised stopping times---is a very classical tool for the study of optimal stopping problems\footnote{\label{foot}The use of randomisation for optimal stopping problems and optimal stopping games was recognised early on, already in discrete-time problems see \citeauthor*{kuhn1953extensive} \cite{kuhn1953extensive}, \citeauthor*{aumann1964mixed} \cite{aumann1964mixed}, \citeauthor*{buckdahn1984randomized} \cite{buckdahn1984randomized}, \citeauthor*{dalang1984arret} \cite{dalang1984arret}, \citeauthor*{yasuda1985randomized} \cite{yasuda1985randomized}, \citeauthor*{nualart1992randomized} \cite{nualart1992randomized}, \citeauthor*{assaf1998optimal} \cite{assaf1998optimal}, and more recently \citeauthor*{chalasani2001randomized} \cite{chalasani2001randomized}, \citeauthor*{rosenberg2001stopping} \cite{rosenberg2001stopping}, \citeauthor*{touzi2002continuous} \cite{touzi2002continuous}, \citeauthor*{solan2012random} \cite{solan2012random}, \citeauthor*{shmaya2014equivalence} \cite{shmaya2014equivalence}, or \citeauthor*{pennanen2018optimal} \cite{pennanen2018optimal}.} and can very intuitively be interpreted as stopping policies, which might not be the case for the other notions of relaxations in the literature related to mean-field games of optimal stopping. For example, \cite{carmona2017mean} relies on an extra level of randomisation, which allows the authors to prove powerful results, such as the convergence of Nash equilibria for the $N$-player game to mean-field equilibria, but somehow loses at the same time some clarity in terms of stopping policy.}

\medskip
The paper is structured as follows. In \Cref{sec:N-player}, we introduce the $N$-player game and the corresponding notion of (approximated) Nash equilibrium. In \Cref{sec:strong-formulation}, we introduce the corresponding mean-field game in strong formulation, for which we extend previous existence results from the literature and derive the master equation. \Cref{sec:weak-formulation} is concerned with the weak formulation of our mean-field games, and with the related existence and convergence results. We also discuss the connection between our formulation and other ones in the literature, and formally derive an analytic formulation of the game. \Cref{sec:examples} applies our theory to some examples and \Cref{sec:proofs} contains the proofs of the main claims of \Cref{sec:weak-formulation}.

\medskip
{\footnotesize\textbf{Notations.} Let $\N$ be the set of non-negative integers and $\N^\star$ the set of positive integers. For any $N\in\N^\star$, and for any $N$-uplet ${\bf v} \coloneqq  (v_1, \dots, v_N)$ of elements of an arbitrary space $E$, and any $k \in [N] \coloneqq  \{1, \dots, N\}$, we denote ${\bf v}^{-k} \coloneqq  (v_1, \dots, v_{k-1}, v_{k+1}, \dots, v_N) \in E^{N-1}$ and, for any $v \in E$, $v \otimes_{k} {\bf v}^{-k} \coloneqq  (v_1, \dots, v_{k-1}, v, v_{k+1}, \dots, v_N) \in E^N$. We also write 
\[
 m^N({\bf v}) \coloneqq  \frac 1 N \sum_{k=1}^N \d_{v_\smallfont{k}},
 \]
the empirical measure associated with the vector ${\bf v}$. Given $p \ge 1$ and a metric space $(E,d)$, $\cP_p(E)$ denotes the space of probability measures on $E$ with finite $p$-th order moment, endowed with the $p$-Wasserstein distance
\[
 \cW_p(\m, \n) \coloneqq   \inf_{\dbQ \in \Gamma(\m,\n)} \bigg\{\int_{E \times E} d^p(x,y) \dbQ(\mathrm{d}x,\mathrm{d}y)\bigg\} ^{1/p},\; \forall (\m, \n) \in E^2, 
 \]
where $\Gamma(\m, \n)$ denotes the set of couplings of $\m$ and $\n$. We also denote by $\cC^d$ the set of continuous paths from $[0,T]$ to $\dbR^d$.}

\section{The game for finitely many players}\label{sec:N-player}

\subsection{Formulation of the problem}

Fix $T >0$, $p \in [1, \infty)$ as well as positive integers $d$, $d^\prime$, and $N$. Let $(\Omega^0, \cF_T^0, \dbF^0 \coloneqq  \{\cF_t^0\}_{0 \le t \le T}, \dbP^0)$ be a filtered probability space endowed with a $d^\prime \times N$-dimensional, $(\dbF^0,\P^0)$--Brownian motion $\bW \coloneqq  (W^1, \dots, W^N)$. We denote $\dbF^\bW$ the filtration generated by $\bW$, and completed under $\P^0$. For $t \in [0,T]$, we denote by $\cT_{t,T}$ the set of $[t,T]$-valued, $\dbF^\bW$--stopping times. Let $\btau \coloneqq  (\t_1, \dots, \t_N) \in \cT_{t,T}^N$, {$\bo \in (\cC^d)^N$} and $\bX^{t, \bo, \btau} \coloneqq  (X^{t, \bo, \btau, 1}, \dots, X^{t, \bo, \btau,N})$ be an $\dbR^{d \times N}$-dimensional process defined by
\begin{align}\label{players-dyn}
\mathrm{d}X_s^{t, \bo, \btau, k} = b_k\big(s, \bX_s^{t, \bo,\btau}\big)I_s^k \mathrm{d}s + \si_k\big(s, \bX_s^{t, \bo, \btau}\big)I_s^k \mathrm{d}W_s^k ,\; k \in [N], \; \mbox{and $\bX_{\cdot \wedge t}^{t, \bo, \btau} = \bo_{\cdot \wedge t}$},
\end{align} 
with $I_s^k \coloneqq  \1_{\{s < \t_\smallfont{k}\}}$, $s\in[t,T]$, $k \in [N]$, and
\begin{align}\label{b-sig-sym}
 b_k(t, \bx) = b\big(t, x_k, m^N(\bx)\big), \; \si_k(t, \bx) =  \si\big(t, x_k, m^N(\bx)\big),\; \mbox{for all $k \in [N]$},
 \end{align}
where $b : [0,T] \times \dbR^d \times \cP_p(\dbR^d) \longrightarrow \dbR$, and $\si : [0,T] \times \dbR^d \times \cP_p(\dbR^d) \longrightarrow\dbR^{d \times  d^\smallfont{\prime}}$ are continuous in $t$ and Lipschitz-continuous in the two other variables (with respect to the $p$-Wasserstein distance $\cW_p$ for the measure-valued one), uniformly in $t$.

\medskip
For any $k\in[N]$, given the stopping strategies of the other players $\btau^{-k}\in\Tc_{0,T}^{N-1}$, the $k$-th player considers the optimisation problem
\begin{align}\label{k-pb-dyn}
V^k(t, \bo ; \btau^{-k}) \coloneqq  \sup_{\t_\smallfont{k} \in \cT_{\smallfont{t}\smallfont{,}\smallfont{T}}} \dbE^{\P^\smallfont{0}}\big[ g\big(\t_k, X_{\t_\smallfont{k}}^{t, \bo, \t_\smallfont{k} \otimes_\smallfont{k} \btau^{\smallfont{-}\smallfont{k}}}, m^N(\btau, \bX_{\t_\smallfont{k}}^{t, \bo, \t_\smallfont{k} \otimes_\smallfont{k} \btau^{\smallfont{-}\smallfont{k}}})\big)\big],
\end{align}
where the map $g : [0,T] \times \dbR^{d} \times \cP_p([0,T] \times \dbR^d) \longrightarrow \dbR$ is Borel-measurable. 

\medskip
We emphasise here that although the $k$-th player leaves the game at time $\t_k$, and therefore receives for payoff a function of the whole vector $\bX$, he only has the power to {\it effectively stop}---or in other words to {\it control}---the $k$-th component $X^k$. By doing so, he will impact the dynamics of the players who will not have exited the game yet. To the best of our knowledge, this is the first time that games where the `frozen' value of the stopped players keeps impacting the dynamics of the other players are considered, in the context of mean-field games and their finite player versions. In the previous literature, the interaction was only between unstopped players, whereas stopped players do not necessarily exit the game in our framework. Note that this special type of interaction can be covered by our model if we allow the interaction terms in the coefficients $b$ and $\si$ to depend on the joint---empirical, in this paragraph---distribution of $\btau$ and $\bX$. This shall be the case when we introduce our mean-field models, but we stick in this section to a simple interaction through the state processes for the sake of clarity.

\medskip
We can now specify what me mean by a Nash equilibrium in this context. We also introduce the notion of $\e$-Nash equilibrium, which will be of interest when we analyse the connection of the $N$-player game with the corresponding mean-field game.
\begin{definition}\label{def:N-Nash}
\rm{(i)} We say $\hat \btau \coloneqq  (\hat \t_1, \dots, \hat \t_N) \in \mathcal{T}_{t,T}^N$ is a \textit{Nash equilibrium} for \eqref{k-pb-dyn} if
\[
V^k(t, \bo ; \hat \btau^{-k}) =  \dbE^{\P^\smallfont{0}}\big[ g\big(\hat \t_k, X_{\hat \t_\smallfont{k}}^{t, \bo, \hat \t_\smallfont{k} \otimes_\smallfont{k} \hat \btau^{\smallfont{-}\smallfont{k}}}, m^N(\hat \btau, \bX_{\hat \t_\smallfont{k}}^{t, \bo, \hat \t_\smallfont{k} \otimes_\smallfont{k} \hat \btau^{\smallfont{-}\smallfont{k}}})\big)\big],\; \forall k \in [N]. 
\]
\rm{(ii)} Let $\e > 0$. We say $\hat \btau \coloneqq  (\hat \t_1, \dots, \hat \t_N) \in \mathcal{T}_{t,T}^N$ is an \emph{$\e$-Nash equilibrium} if
\[
V^{k}(t, \bo,\hat \btau^{-k}) \le  \dbE^{\P^\smallfont{0}}\big[ g\big(\hat \t_k, X_{\hat \t_\smallfont{k}}^{t, \bo, \hat \t_\smallfont{k} \otimes_\smallfont{k} \hat \btau^{\smallfont{-}\smallfont{k}}}, m^N(\hat \btau, \bX_{\hat \t_\smallfont{k}}^{t, \bo, \hat \t_\smallfont{k} \otimes_\smallfont{k} \hat \btau^{\smallfont{-}\smallfont{k}}})\big)\big]+ \e,\; \forall k \in [N].
\]

\end{definition}

\subsection{Pathwise derivatives}\label{sec:derivatives}

As will be shown in the next paragraph, the system of dynamic programming equations related to the above problem involves some path-dependency. We therefore introduce an appropriate notion of pathwise derivatives. Since we are only considering continuous paths, and in particular with diffusion processes, we use the notion defined by \citeauthor*{ekren2014viscosity} \cite{ekren2014viscosity}. Given a normed space $(E, \lVert \cdot \rVert_E) $, we denote by $C^0([0,T] \times (\cC^d)^N, E)$ the set of $\dbF$--progressively measurable and continuous functions from $[0,T] \times (\cC^d)^N$ to $E$, where $(\cC^d)^N$ is equipped with the norm $\lvert \o \rvert \coloneqq  \sup_{t \in [0,T]} \max_{k \in [N]} \lvert \o_t^k \rvert$ for all $\bo \in (\cC^d)^N$.

\medskip
For any $L > 0$, let $\cP_L^N$ denote the set of probability measures on $(\cC^d)^N$ under which the canonical process is a continuous semimartingale with characteristics bounded by $L$. We denote by $u \in C^{1,2}([0,T] \times (\cC^d)^N)$ the set of functions $u : [0,T] \times (\cC^d)^N \longrightarrow \dbR$ such that there exist $\pa_t u \in C^0([0,T] \times (\cC^d)^N, \dbR)$, $\pa_\bo u = (\pa_{\o^1} u, \dots, \pa_{\o^N}) \in (C^0([0,T] \times (\cC^d)^N, \dbR^d))^N$ and $\pa_{\bo \bo}^2 u = (\pa_{\o^\smallfont{1} \o^\smallfont{1}} u, \dots, \pa_{\o^\smallfont{N} \o^\smallfont{N}} u) \in (C^0([0,T] \times (\cC^d)^N, \dbS_d))^N$ such that, for all $\dbP \in \cup_{L > 0} \cP_L^N$, $u$ satisfies
\[
\mathrm{d}u(t,\bo) = \pa_t u(t,\bo)\mathrm{d}t + \pa_\bo u(t,\bo) \cdot \mathrm{d}\bo_t + \frac 1 2 \mathrm{Tr}\big[\pa_{\bo \bo}^2 u(t,\bo) \mathrm{d}\langle \bo \rangle_t\big], \; \mbox{$\dbP$\rm--a.s.} 
\]

\subsection{Dynamic programming equation}

In this section, we assume that for any $k\in[N]$, and any $(t,\mathbf{x})\in[0,T]\times(\Cc^d)^N$, the matrix $\si_k(t,\mathbf{x}) \si_k^\top(t,\mathbf{x})$ is invertible, so that $\dbF^\bW$ is also the filtration generated by $\bX^{\btau}$, see for instance \citeauthor*{soner2011quasi} \cite[Lemma 8.1]{soner2011quasi}. The $\dbF^\bW$--stopping times then write $\t = \t(\bX^\btau)$, and their survival process $I_t = I_t(\bX^\btau) = I_t(\bX^\btau_{\cdot \wedge t})$, $t\in[0,T]$. The dynamics of the players in \Cref{players-dyn} are therefore path-dependent, and the formulation \eqref{k-pb-dyn} of our problem is time-consistent. 

\medskip
We observe that \eqref{k-pb-dyn} is a standard optimal stopping problem, with $\bX^{T \otimes_\smallfont{k} \btau^{\smallfont{-}\smallfont{k}}}$ as state process since the $k$th player is the only one to effectively stop the $k$th component of the state process and is facing, from his point of view, a standard optimal stopping problem. This component may then be considered as unstopped, hence the $T$ in the exponent). Therefore, we may characterise the value function of the $k$-th player by the following dynamic programming equation
\begin{align}\label{N-dpe}
\min\bigg\{ - \bigg(\pa_t + \cL^{k} + \sum_{j \in[N]\setminus\{k\}} I_t^j(\bo) \cL^j\bigg) u^k(t, \bo), u^k(t,\bo) - g(\bo_t) \bigg\} = 0,\; u^k(T, \bo) = g(\bo_T),\; (t,\bo)\in[0,T)\times(\Cc^d)^N,
\end{align}
where 
\[
\cL^{k} \f(t,\bo) \coloneqq  b(t, \bo_t) \cdot \pa_{\o_\smallfont{k}} \f(t,\bo) + \frac 1 2 \mathrm{Tr}\big[(\si \si^\top)(t, \bo_t) \pa_{\o_k \o_k}^2 \f(t,\bo)\big], \;\mbox{for all $(t,\bo)\in[0,T]\times(\Cc^d)^N$ and $k \in [N]$.} 
\]

Note that this is indeed a deterministic function of the initial conditions as we only allow for pure, closed-loop stopping policies.

\subsection{Nash equilibrium}


 Using the solutions of \Cref{N-dpe} for all $k \in [N]$, a Nash equilibrium $\hat \t$ could be characterised via its survival functions $\hat \bI \coloneqq  (\hat I^1, \dots, \hat I^N)$ via
\begin{align}\label{N-Nash}
 \hat I_t^k(\bo) = \begin{cases} 1,\; \mbox{if $\min_{0 \le s \le t} \{ u^k(s,\bo) - g_k(\bo_s)\} > 0$}, \\[1em]
 0,\; \mbox{if $\min_{0 \le s \le t}\big \{- \big(\pa_t + \cL^{k} + \sum_{j \in[N]\setminus\{ k\}} \hat I_s^j(\bo) \cL^j\big) u^k(s, \bo)\big\} > 0,$}
\end{cases}\mbox{for all $(k,t,\bo) \in [N]\times[0,T]\times(\Cc^d)^N$}.
\end{align}
This can be seen as the counterpart of the fixed-point condition arising in $N$-player games of standard controls for the games of stopping times.

\section{Mean-field game in strong formulation}\label{sec:strong-formulation}
In this section, we formulate the mean-field game limit of the $N$-player game from \Cref{sec:N-player}.

{
\subsection{Stopped McKean--Vlasov SDEs and admissible stopping times}

Let $W^0$ be an $(\dbF^0,\P^0)$--Brownian motion. Consider the following SDE
\begin{equation}\label{stoppedMKV}
X_t^\tau = X_0 + \int_0^{t \wedge \t} b\big(s, X^\t_s, \dbP^0_{X^\smallfont{\t}_\smallfont{s}}\big)\mathrm{d}s + \int_0^{t \wedge \t} \si\big(s, X^\t_s, \dbP^0_{X^\smallfont{\t}_\smallfont{s}}\big)\mathrm{d}W^0_s, \ \dbP_{X_0}^0 = \m_0.
\end{equation}
Since the law of the process is involved in the coefficients, and the stopping time $\tau$ is {\it intrinsic} to the dynamics, we call such process a {\it stopped McKean--Vlasov diffusion}. We also call {\it admissible stopping times} the elements of $\cT_{0,T}$ such that \eqref{stoppedMKV} has a unique solution, and we still denote by $\cT_{0,T}$ the set of admissible stopping times. Note that it is straightforward to check that this set is non-empty: it contains, for instance, all the stopping times depending on $W^0$ only. However, the question of the existence and uniqueness of solutions to \eqref{stoppedMKV} remains challenging, and we shall leave it for further research. Such dynamics can be interpreted as a continuous distribution of symmetric interacting agents playing the same stopping policy, and corresponds formally to the mean-field limit of \eqref{players-dyn}. We refer to \citeauthor*{talbi2022finite} \cite{talbi2022finite} for a deeper analysis of propagation of chaos results for stopped McKean--Vlasov SDEs. 
}

\subsection{Formulation and existence of a mean-field equilibrium}

Let $W^0$ be an $(\dbF^0,\P^0)$--Brownian motion. Fix $m \in \cP_p([0,T] \times \cC^d)$, for which we denote by $\m$ the second marginal. Let $X^\m$ the process defined by the, standard, diffusion
\[
\mathrm{d}X_t^\m = b(t, X_t^\m, \m)\mathrm{d}t + \si(t, X_t^\m, \m)\mathrm{d}W_t^0, \; \dbP^0_{X_\smallfont{0}^\smallfont{\m}} = \m_0.
\]
This models the dynamics of a representative player given the macroscopic behaviour $\m$ of the continuous infinity of other players. Given a reward $J : \cT_{0,T} \times \cP_p([0,T] \times \cC^d) \longrightarrow \dbR$, the representative player faces the problem
\begin{align}\label{strong-MFG}
V_0(m) \coloneqq  \sup_{\t \in \cT_{\smallfont{0}\smallfont{,}\smallfont{T}}} J(\t,m).
\end{align}

\begin{definition}\label{def:strong-MFE}{\rm
We say $\hat \t \in \cT_{0,T}$ is a strong mean-field equilibrium $($strong {\rm MFE} for short$)$ if it satisfies
\[
V_0(m) =  J(\hat \t, m),\; \mbox{\rm and} \; \dbP^0_{(\hat \t, X_{\smallfont{\cdot}\smallfont{\wedge} \smallfont{\hat \t}}^\smallfont{\m})} = m. 
\]}
\end{definition}

\no The following existence result is an extension of \citeauthor*{carmona2017mean} \cite[Theorem 3.5]{carmona2017mean}.
\begin{theorem}\label{theorem:existence-strong-MFE}
Assume that

\medskip
{$(i)$} $J$ is upper-semicontinuous in $\t;$

\medskip
{$(ii)$} $J$ is super-modular in $\t$, \emph{i.e.} for all $(\t, \t^\prime) \in \cT_{0,T}^2$ and $m \in \cP_p([0,T] \times \cC^d)$
\[
J(\t \wedge \t^\prime, m) + J(\t \vee \t^\prime, m) \ge J(\t, m) + J(\t^\prime, m);
\]

{$(iii)$} we may endow $\cP_p([0,T] \times \cC^d)$ with a partial order $\preceq$ such that

\smallskip
{$\quad(a)$} $ \cT_{0,T}\ni \t \longmapsto \dbP^0 \circ (\t, X^\t)^{-1} \in \cP_p([0,T] \times \cC^d)$
is non-decreasing with respect to $\preceq$, where $X^\tau$ is the unique strong solution of the stopped McKean--Vlasov diffusion \eqref{stoppedMKV}$;$

\smallskip
{$\quad(b)$} $J$ has non-decreasing differences with respect to $m$ in the following sense: for $(\t,\t^\prime,m,m^\prime)\in \cT_{0,T}^2\times\cP_p([0,T] \times \cC^d)^2$ with $\t\leq \t^\prime$ and $m \preceq m^\prime$, we have
\[
J(m^\prime, \t^\prime) - J(m^\prime, \t) \ge J(m, \t^\prime) - J(m,\t).
\]
Then there exists a strong \emph{MFE}.
\end{theorem}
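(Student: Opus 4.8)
The plan is to recast the existence of a strong \textrm{MFE} as a fixed-point problem for a \emph{monotone} self-map on the complete lattice of stopping times, and to conclude via Tarski's fixed-point theorem. First, I would record that $(\cT_{0,T}, \le)$ --- stopping times ordered by $\dbP^0$--a.s.\ inequality, identified up to $\dbP^0$--null events --- is a complete lattice: since the completed Brownian filtration $\dbF^\bW$ is right-continuous, the essential supremum and essential infimum of any family of $\dbF^\bW$--stopping times are again $\dbF^\bW$--stopping times, with $0$ and $T$ as bottom and top, and the lattice operations agree with the pointwise $\wedge$ and $\vee$. If \eqref{stoppedMKV} is only well-posed on a sub-collection of $\cT_{0,T}$, one restricts to that sub-collection, which \emph{(iii)}$(a)$ implicitly treats as a complete sublattice.

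Next, I would study, for a fixed $m \in \cP_p([0,T]\times\cC^d)$, the representative agent's stopping problem \eqref{strong-MFG}. Upper semicontinuity of $\t \mapsto J(\t, m)$ (hypothesis \emph{(i)}) ensures the supremum is attained, so that $\Rc(m) \coloneqq \{\t \in \cT_{0,T} : J(\t, m) = V_0(m)\}$ is non-empty; super-modularity in $\t$ (hypothesis \emph{(ii)}) forces $\Rc(m)$ to be a sublattice, since for $\t, \t' \in \Rc(m)$,
\[
V_0(m) \ge J(\t \wedge \t', m) \ge J(\t, m) + J(\t', m) - J(\t \vee \t', m) \ge V_0(m),
\]
so $\t \wedge \t'$ and, by the same token, $\t \vee \t'$ lie in $\Rc(m)$; and \emph{(i)} applied along increasing nets upgrades this to a \emph{complete} sublattice, whence $\hat\t(m) \coloneqq \bigvee \Rc(m)$ is itself optimal. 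By Topkis's monotonicity theorem, super-modularity \emph{(ii)} together with the monotone-differences condition \emph{(iii)}$(b)$ imply that the greatest selection $m \mapsto \hat\t(m)$ is monotone with respect to $\preceq$.

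Then I would set $\Phi : \cT_{0,T} \longrightarrow \cT_{0,T}$, $\Phi(\t) \coloneqq \hat\t\big(\dbP^0 \circ (\t, X^\t_{\cdot \wedge \t})^{-1}\big)$, where $X^\t$ is the unique solution of \eqref{stoppedMKV}. By \emph{(iii)}$(a)$ the inner map $\t \mapsto \dbP^0 \circ (\t, X^\t_{\cdot \wedge \t})^{-1}$ is $\preceq$-monotone, and the partial order $\preceq$ in \emph{(iii)} is chosen precisely so that composing it with the monotone selection $m \mapsto \hat\t(m)$ yields a \emph{monotone} self-map $\Phi$ of the complete lattice $\cT_{0,T}$. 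Tarski's fixed-point theorem then produces $\hat\t$ with $\Phi(\hat\t) = \hat\t$. Setting $m \coloneqq \dbP^0 \circ (\hat\t, X^{\hat\t}_{\cdot \wedge \hat\t})^{-1}$, the fixed-point identity reads $\hat\t \in \Rc(m)$, i.e.\ $V_0(m) = J(\hat\t, m)$; and by uniqueness for \eqref{stoppedMKV}, the fixed-flow diffusion $X^\m$ driven by the second marginal $\m$ of $m$ agrees with $X^{\hat\t}$ up to time $\hat\t$, so that $\dbP^0_{(\hat\t, X^\m_{\cdot \wedge \hat\t})} = m$. Hence $\hat\t$ is a strong \textrm{MFE}.

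The main obstacle, I expect, lies in the second step: proving that $\Rc(m)$ is a \emph{non-empty, complete} sublattice of $\cT_{0,T}$ --- that is, that the supremum in \eqref{strong-MFG} is attained, that the supremum of an arbitrary family of optimisers remains optimal, and crucially that it remains a \emph{pure} stopping time rather than a merely randomised one (compactness being naturally available only after embedding $\cT_{0,T}$ into the space of randomised stopping times, following Baxter--Meyer) --- together with the well-posedness and lattice-stability of the stopped McKean--Vlasov equation \eqref{stoppedMKV}, which the authors themselves flag as delicate. Here hypothesis \emph{(i)} and the order structure of optimal stopping (Snell envelopes, first hitting times of the obstacle) must be exploited with care; by contrast, the monotonicity bookkeeping for Topkis's theorem and the verification that a fixed point of $\Phi$ is an equilibrium are routine.
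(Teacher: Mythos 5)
Your proposal is correct and follows essentially the same route as the paper: establish that $\cT_{0,T}$ is a complete lattice, use Topkis's monotonicity theorem with hypotheses $(i)$, $(ii)$ and $(iii)(b)$ to obtain a $\preceq$-monotone maximal selection of the argmax correspondence, compose with the map $\t \longmapsto \dbP^0 \circ (\t, X^\t)^{-1}$ (monotone by $(iii)(a)$), and conclude by Tarski's fixed-point theorem followed by the same verification that the fixed point is a strong MFE. The only cosmetic difference is that you argue the existence of a greatest optimiser directly via the sublattice structure of the set of maximisers, whereas the paper cites Milgrom--Roberts for the maximal element and Carmona--Delarue--Lacker for the complete-lattice property of $\cT_{0,T}$.
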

\begin{proof}
By \cite[Theorem B.2]{carmona2017mean}, $\cT_{0,T}$ is a complete lattice. Since $(\cP_p([0,T] \times \cC^d), \preceq)$ is a partially ordered set, and assumptions $(i)$ and $(ii)$ hold, we may apply Topkis's monotonicity theorem, see for instance \citeauthor*{milgrom1990rationalizability} \cite[page 1262]{milgrom1990rationalizability}, to deduce that 
\[
\Phi(m) \coloneqq  \underset{\t \in \cT_{\smallfont{0}\smallfont{,}\smallfont{T}}}{\mathrm{argmax}}\; J(\t, m), \; m\in\Pc_p([0,T]\times\Cc^d),
\]
is non-decreasing in the strong set order, \emph{i.e.} for all $(m,m^\prime)\in \cP_p([0,T] \times \cC^d)^2$ with $m \preceq m^\prime$, and any $(\t, \t^\prime) \in \Phi(m) \times \Phi(m^\prime)$, we have $\t \wedge \t^\prime \in \Phi(m)$ and $\t \vee \t^\prime \in \Phi(m^\prime)$. In addition, since $J$ is upper-semicontinuous with respect to $\t$, \cite[Theorem 1]{milgrom1990rationalizability} guarantees the existence of a maximal element $\hat \phi(m) \in \Phi(m)$ for all $m\in \cP_p([0,T] \times \cC^d)$. The fact that $\Phi$ is non-decreasing in the strong order implies that $\phi$ is non-decreasing for the partial order $\preceq$. Also, by $(iii)$.$(a)$
\[
 \cT_{0,T} \ni \tau \longmapsto \dbP^0 \circ (\t, X^\t)^{-1} \in \cP_p([0,T] \times \cC^d),
 \]
is non-decreasing. Then $\cT_{0,T} \ni \t  \longmapsto \hat \phi(\dbP^0_{(\t, X^\t)}) \in \cT_{0,T}$ is a monotone map from a complete lattice into itself. Hence, by Tarski's fixed point theorem, see \citeauthor*{tarski1955lattice} \cite[Theorem 1]{tarski1955lattice}, there exists $\hat \t \in \cT_{0,T}$ such that
\[
V_0\big(\dbP^0_{(\hat \t, X^\smallfont{\t})}\big)= J\big(\hat \t, \dbP^0_{(\hat \t, X^\smallfont{\t})}\big). 
\]
Denoting $\hat \m \coloneqq  \dbP^0_{X^{\smallfont{\hat \t}}}$, we see that $X^{\hat \t} = X_{\cdot \wedge \hat \t}^{\hat \m}$, and therefore $\dbP^0 \circ (\hat \t, X^{\hat \t})^{-1} = \dbP^0(\hat \t, X_{\cdot \wedge \hat \t}^{\hat \m})^{-1}$. Thus, $\hat \t$ is a strong MFE according to Definition \ref{def:strong-MFE}.
\end{proof}

Assumption $(iii)$ in \Cref{theorem:existence-strong-MFE} is hard to check in general. In \cite{carmona2017mean}, $m$ is only a distribution on $[0,T]$, so that it is only needed to find a suitable partial order on $\cP([0,T])$, which is naturally done in the sense of the stochastic order, \emph{i.e.} through cumulative density functions. The authors are then able to present in \cite[Proposition 3.6]{carmona2017mean} an example of sufficient condition for $(iii)$.$(b)$ to hold. In our context, it is not clear how to find a partial order on $\cP_p([0,T] \times \cC^d)$ that would fit for the general class of problems covered by \eqref{strong-MFG}. 

\medskip
However, if we know more accurately the nature of the mean-field interaction as well as the objective function, we may be able to derive more precise assumptions to ensure the existence of an MFE. Consider for example the case where (using the Markovian setting for the sake of clarity) 
\begin{align}\label{J-expectation}
J(\t, m) \coloneqq  \dbE^{\P^\smallfont{0}}\big[\phi(B_\t) \psi(\m_\t^2)\big],
\end{align}
where $B$ is a $d$-dimensional Brownian motion, $\phi : \dbR^d \to \dbR$ and $\psi : \dbR^d \to \dbR$ are upper semicontinuous and nonnegative, with $\phi$ convex, $\psi$ nondecreasing, and $\m_t^2 \coloneqq \int_{\dbR^d} x^2 \mathrm{d} \m_t(x)$ for all $t \in [0,T]$. In this case, we may weaken the requirements in \Cref{def:strong-MFE} and say that $\hat \t \in \cT_{0,T}$ is a strong MFE if 
\[
V_0(\m) = J(\hat \t, \m),\; \mbox{and}\; \dbE^{\P^\smallfont{0}}\big[h(X_{t \wedge \hat \t})\big] = \m_t[h],\; \mbox{for all $t \in [0,T],$}
\]
observing that, in this setting, $V_0$ and $J$ depend only on $\m$. We may then prove the existence of a strong MFE. Indeed, introduce the following partial order on $\cP_2(\cC^d)$:
$$ \m \preceq \tilde \m \q \mbox{whenever} \q t \mapsto \psi(\tilde \m_t^2) - \psi(\m_t^2) \ \mbox{is nondecreasing.}$$ 
Assumptions (i) and (ii) of Theorem \ref{theorem:existence-strong-MFE} being clearly satisfied, we focused on Assumptions (iii) (a) and (b). First, for $\t \le \t'$, observe that
$$ \psi\big( \dbE\big[ B_{t \wedge s}^2 \big]|_{s = \t'} \big) - \psi\big( \dbE\big[ B_{t \wedge s}^2 \big]|_{s = \t'} \big) = \psi(t \wedge \t') - \psi(t \wedge \t), $$
which is nondecreasing in $t$ as $\psi$ is nondecreasing. Therefore Assumption (iii) (a) is satisfied. Moreover, for $\m \preceq \tilde \m$, we have as $\phi$ is convex:
$$ (\psi(\tilde \m_t^2) - \psi( \m_t^2))\dbE\big[ \phi(B_t) | \cF_s \big] \ge (\psi(\tilde \m_s^2) - \psi( \m_s^2))\phi(B_s) \q \mbox{for all $s \le t$}, $$
which proves that $t \mapsto  (\psi(\tilde \m_\t^2) - \psi(\m_\t^2))\phi(B_\t)$ is a sub-martingale and implies that Assumption (iii) (b) is satisfied. We may therefore apply Theorem \ref{theorem:existence-strong-MFE} and obtain the existence of an equilibrium.

\subsection{Master equation}

In this section, we denote for all $t \in [0,T]$
\[
\bQ_t \coloneqq  [t,T) \times \dbR^d \times \cP_p(\cC^d), \; \mbox{and} \; \overline\bQ_t \coloneqq  [t,T] \times \dbR^d \times \cP_p(\cC^d). 
\]

We start by defining useful notions of differentiability. 
\begin{definition}\label{C12S} {\rm
\no {$(i)$} $u : \cP_p(\cC^d) \longrightarrow \dbR$ has a functional linear derivative if there exists $\d_m u:  \cP_p(\cC^d) \times \cC^d \longrightarrow \dbR$ satisfying for any $(\m, \n) \in \cP_2(\dbR^d)\times\cP_p(\cC^d) $
\begin{align*}
u(\n)-u(\m) = \displaystyle\int_0^1 \int_{\cC^\smallfont{d}} \d_m u(\l \n + (1-\l)\m, \o)(\n-\m)(\mathrm{d}\o)\mathrm{d}\l,
\end{align*}
and $\d_m u$ has $p$-polynomial growth in $x \in \dbR^d$, locally uniformly in $m \in \cP_p(\cC^d)$.

\medskip
{$(ii)$} For any $t\in[0,T]$, we denote by $C^{1,2,1}_b(\ol \bQ_t)$ the set of bounded functions $u: \ol \bQ_t \longrightarrow \dbR$ such that $\pa_t u$, $\pa_x u$, $\pa_{xx}^2 u$, $\d_m u$, $\pa_\o \d_m u$, $\pa_{\o \o}^2\d_m u$ exist and are continuous and bounded in all their variables, where the pathwise derivatives are defined as in {\rm\Cref{sec:derivatives}}, with $N=1$.}
 \end{definition}

\no Introduce the dynamic version of the problem faced by the representative player:
given $m \in \cP_p([0,T] \times \cC^d)$ with second marginal $\m$ and $\t \in \cT_{0,T}$ with survival process $I$, we denote by $X^{t, \m, \t}$ the stopped McKean--Vlasov diffusion
\[
X_s^{t, \m, \t} = X_t^{t, \m, \t} + \int_t^s b\big(r, X_r^{t, \m, \t}, \dbP^0_{X_\smallfont{r}^{\smallfont{t}\smallfont{,} \smallfont{\m}\smallfont{,} \smallfont{\t}}}\big)I_r(X)\mathrm{d}r + \int_t^s \si\big(r, X_r^{t, \m, \t}, \dbP^0_{X_\smallfont{r}^{\smallfont{t}\smallfont{,} \smallfont{\m}\smallfont{,} \smallfont{\t}}}\big)I_r(X)\mathrm{d}W_r^0, \; s\in[t,T],
\]
such that $\dbP^0_{X_{\smallfont{\cdot} \smallfont{\wedge}\smallfont{t}}^{\smallfont{t}\smallfont{,} \smallfont{\m}\smallfont{,} \smallfont{\t}}} = \m$. This represents the global dynamics of the continuous population of players. The dynamics $X^{t, x, \m,\t}$ of the representative player is then defined by the diffusion
\[
X_s^{t, x, \m, \t} = x + \int_t^s b\big(r, X_r^{t, x, \m, \t}, \dbP^0_{X_\smallfont{r}^{\smallfont{t}\smallfont{,} \smallfont{\m}\smallfont{,} \smallfont{\t}}}\big)\mathrm{d}r + \int_t^s \si\big(r, X_r^{t, x, \m, \t}, \dbP^0_{X_\smallfont{r}^{\smallfont{t}\smallfont{,} \smallfont{\m}\smallfont{,} \smallfont{\t}}}\big)\mathrm{d}W_r^0, \;s\in[t,T].
\] 
Note that, unlike $X^{t, \m, \t}$, $X^{t,x,\m,\t}$ is a classical diffusion. Given the stopping strategy $\t$ of the other players, the single player faces the optimisation problem
\begin{align}\label{MF-pb}
V(t, x, \m ; \t) &\coloneqq  \sup_{\t^\smallfont{\prime} \in \cT_{\smallfont{t}\smallfont{,}\smallfont{T}}} \dbE^{\P^\smallfont{0}}\Big[ g\big(\t^\prime,X_{\t^\smallfont{\prime}}^{t, x, \m, \t}, \dbP^0_{X_s^{t, \m, \t}}|_{ s = \t^\smallfont{\prime}}\big)  \Big],\; (t,x,\mu)\in[0,T]\times\R^d\times\Pc_p(\Cc^d).
\end{align}


Observing that \eqref{MF-pb} is a standard optimal stopping problem---with the particularity that the second component of the state process is a deterministic flow of distributions---, we may derive the corresponding dynamic programming equation for closed-loop strategies, for $(t,x,\mu)\in[0,T)\times\R^d\times\Pc_p(\Cc^d)$:
\begin{align}\label{master-equation}
\min\bigg\{ - \big(\pa_t + \cL^x\big) U(t,x,\m) - \int_{\cC^\smallfont{d}} I_t(\o) \cL^{\o} \d_m U(t,x,\m,\o) \m(\mathrm{d}\o), \; U(t,x,\m) - g(t,x,\m_t) \bigg\} = 0,
\end{align}
with terminal condition $U(T,\cdot) = g(T,\cdot)$, and
\begin{align*}
\cL^x \f(t,x,\m) &\coloneqq  b(t,x,\m) \cdot \pa_x \f(t,x,\m) + \frac 1 2 \mathrm{Tr}\big[(\si \si^\top)(t,x,\m) \pa_{xx}^2 \f(t,x,\m)\big], \\
\cL^\o \psi(t,x,\m,\o) &\coloneqq  b(t,\o_t,\m) \cdot \pa_\o \f(t,x,\m, \o) + \frac 1 2 \mathrm{Tr}\big[(\si \si^\top)(t,\o_t,\m) \pa_{\o \o}^2 \f(t,x,\m, \o)\big].
\end{align*}

\begin{remark}
{\rm (i)} Note that, for simplicity, we assumed that the dependence of $g$ on $m$ is only in $\m$. In the general case where the measure-valued argument in the Agent's criterion is $\dbP^0_{(\t, X_s^{t, \m, \t})}$, the derivative $\d_m U$ would involve two new variables $(s, \o) \in [0,T] \times \cC^d$, and the above equation would write
 \begin{align}\label{master-equationgen}
\min\bigg\{ - \big(\pa_t + \cL^x\big) U(t,x,m) - \int_{[0,T] \times \cC^\smallfont{d}} I_t(\o) \cL^{\o} \d_m U(t,x,m,s,\o) m(\mathrm{d}s,\mathrm{d}\o), \; U(t,x,m) - g(t,x,m_t) \bigg\} = 0, \nonumber
\end{align} 
with $m \in \cP_p([0,T] \times \cC^d)$.  \\
{\rm (ii)} We observe that, given the behavior $\dbP^0_{X^{t, \m, \tau}}$ of the other Players, the representative Agent faces in some sense a Markov stopping problem, as its own state $X^{t,x,\m,\t}$ is a standard (stopped) diffusion. This contrasts with the $N$-Player game, where the problem faced by the $k$th Player is fully path dependent, even in the state process $X^k$. Indeed, the latter depends on the strategies played by the other Players through the dependency in the empirical measure, which in turn may depend on the whole history of $X^k$, generating path dependency for all the coordinates of the state process. 
\end{remark}


\begin{theorem}\label{theorem:verification}
Assume that there exists a classical solution to \eqref{master-equation} with $I = \hat I[U]$, where
 \begin{align*}
 \hat I[U](t, \o, \m) = \begin{cases} \displaystyle 1, \; \mbox{\rm if $\displaystyle\min_{0 \le s \le t} \big\{U(s,\o_s,\m) - g(s,\o_s,\m)\big\} > 0$}, \\[0.8em] \displaystyle 0, \; \mbox{\rm if $\displaystyle\max_{0 \le s \le t}\bigg\{- (\pa_t + \cL^x) U(s,\o_s,\m) - \int_{\cC^\smallfont{d}}\hat I[u](t,\o, \m) \cL^{\o} \d_m U(t,x,\m,\o) \m(\mathrm{d}\o)\bigg \}> 0.$} \end{cases}
\end{align*}
Then, denoting $\hat \t$ the stopping time with survival process $\hat I[u]$,
\[
U(t,x,\m) = V(t,x,\m;\hat \t), \; (t,x,\mu)\in[0,T]\times\R^d\times\Pc_p(\Cc^d),
\]
and $\hat \t$ is a strong {\rm MFE}. 
\end{theorem}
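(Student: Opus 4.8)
The proof is a verification argument of the classical type for optimal stopping, supplemented by a (tautological) check of the mean-field consistency condition. Fix $(t,x,\m)\in[0,T]\times\dbR^d\times\cP_p(\cC^d)$, let $\hat\t$ be the stopping time whose survival process is $\hat I[u]$ and --- assuming, as is implicit in the statement, that $\hat\t$ is admissible --- let $X^{\mathrm{pop}}\coloneqq X^{t,\m,\hat\t}$ be the associated stopped McKean--Vlasov population dynamics, with marginal flow $\hat\m_s\coloneqq\dbP^0_{X^{\mathrm{pop}}_s}$ and path-law flow $\hat\m^{[s]}\coloneqq\dbP^0_{X^{\mathrm{pop}}_{\cdot\wedge s}}$, so that $\hat\m^{[t]}=\m$ by construction. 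The analytic heart of the argument is a chain rule: since $u\in C^{1,2,1}_b(\ol\bQ_t)$ and $s\longmapsto\hat\m^{[s]}$ is a deterministic curve of measures evolving, in the Fokker--Planck sense, according to the stopped McKean--Vlasov dynamics --- whose generator on the $\cC^d$-variable is $\hat I_s(\o)\cL^\o$ --- the functional It\^o formula of \Cref{sec:derivatives} should yield, $\dbP^0$--a.s., for every $\t^\prime\in\cT_{t,T}$,
\[
\mathrm{d}u\big(s,X^{t,x,\m,\hat\t}_s,\hat\m^{[s]}\big)=\big(\pa_t+\cL^x\big)u\big(s,X^{t,x,\m,\hat\t}_s,\hat\m^{[s]}\big)\,\mathrm{d}s+\int_{\cC^d}\hat I_s(\o)\,\cL^\o\d_m u\big(s,X^{t,x,\m,\hat\t}_s,\hat\m^{[s]},\o\big)\,\hat\m^{[s]}(\mathrm{d}\o)\,\mathrm{d}s+\pa_x u\cdot\si\,\mathrm{d}W^0_s.
\]
Making this identity precise --- in particular disentangling the path-dependence carried simultaneously by the survival process $\hat I$, by the pathwise derivatives of $u$, and by the population flow $\hat\m^{[\cdot]}$, and pinning down which measure argument feeds $\hat I$ --- is the main obstacle; everything else is soft.

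Because $u$ solves \eqref{master-equation}, the drift in the chain rule is non-positive, so $s\longmapsto u(s,X^{t,x,\m,\hat\t}_s,\hat\m^{[s]})$ is an $(\dbF^0,\dbP^0)$--supermartingale; it is moreover bounded, as $u\in C^{1,2,1}_b(\ol\bQ_t)$, so that the stochastic integral is a genuine martingale and optional sampling applies. Hence, for every $\t^\prime\in\cT_{t,T}$,
\[
u(t,x,\m)\ge\dbE^{\dbP^0}\big[u\big(\t^\prime,X^{t,x,\m,\hat\t}_{\t^\prime},\hat\m^{[\t^\prime]}\big)\big]\ge\dbE^{\dbP^0}\big[g\big(\t^\prime,X^{t,x,\m,\hat\t}_{\t^\prime},\hat\m_{\t^\prime}\big)\big],
\]
the last step using the obstacle inequality $u\ge g$ built into \eqref{master-equation}; taking the supremum over $\t^\prime$ gives $u(t,x,\m)\ge V(t,x,\m;\hat\t)$.

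For the reverse inequality, take $\t^\prime=\hat\t$. On the (stochastic) interval $[t,\hat\t)$, the definition of $\hat I[u]$ forces $u>g$, so the first term of the minimum in \eqref{master-equation} vanishes there; consequently the drift in the chain rule is zero and $s\longmapsto u(s\wedge\hat\t,X^{t,x,\m,\hat\t}_{s\wedge\hat\t},\hat\m^{[s\wedge\hat\t]})$ is a bounded martingale. Evaluating at $s=T$, and using that $u=g$ at $\hat\t$ (by the definition of $\hat I[u]$, modulo the harmless convention fixing the behaviour on the set where both terms of the minimum vanish simultaneously) and at $T$ (terminal condition), we obtain $u(t,x,\m)=\dbE^{\dbP^0}[g(\hat\t,X^{t,x,\m,\hat\t}_{\hat\t},\hat\m_{\hat\t})]$; combined with the previous paragraph this forces $u(t,x,\m)=V(t,x,\m;\hat\t)$ and shows that the supremum in \eqref{MF-pb} is attained at $\hat\t$.

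It remains to check that $\hat\t$ is a strong MFE in the sense of \Cref{def:strong-MFE}. Specialising to $t=0$, writing $\m$ for the law of the constant path equal to $X_0\sim\m_0$, setting $m\coloneqq\dbP^0\circ(\hat\t,X^{\mathrm{pop}}_{\cdot\wedge\hat\t})^{-1}$ and denoting $\hat\m$ its second marginal, one has $X^{\mathrm{pop}}=X^{\hat\t}=X^{\hat\m}_{\cdot\wedge\hat\t}$, so that $\dbP^0\circ(\hat\t,X^{\hat\m}_{\cdot\wedge\hat\t})^{-1}=m$: the consistency requirement holds by construction, precisely because the population is postulated to play $\hat\t$. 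Finally, integrating $u(0,x,\m)=V(0,x,\m;\hat\t)$ against $\m_0(\mathrm{d}x)$ --- the closed-loop rule $\hat\t$ being a best response simultaneously for every initial state --- yields $V_0(m)=\int_{\dbR^d}u(0,x,\m)\,\m_0(\mathrm{d}x)=J(\hat\t,m)$, which is the optimality requirement. Hence $\hat\t$ is a strong MFE, the one genuinely delicate point in the whole argument being, as announced, the rigorous justification of the functional It\^o chain rule above.
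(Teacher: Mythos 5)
Your proposal is correct and follows essentially the same route as the paper: an It\^o expansion of $u$ along the pair (representative state, population flow), the supersolution property of \eqref{master-equation} giving the supermartingale bound $u\ge V$, and the definition of $\hat I[u]$ giving the martingale property up to $\hat\t$ for the reverse inequality, with the consistency condition holding by construction. The one point you flag as open---the rigorous functional It\^o chain rule for $u(s,X_s,\hat\m_s)$ along the deterministic measure flow---is exactly what the paper settles by invoking the It\^o formula of \citeauthor*{wu2020viscosity} \cite[Theorem 2.7]{wu2020viscosity} (together with localisation arguments in place of your boundedness appeal), so supplying that reference closes the only gap in your argument.
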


\begin{proof}
Let $\t \in \cT_{t,T}$. We have by Itô's formula (see \citeauthor*{wu2020viscosity} \cite[Theorem 2.7]{wu2020viscosity}\footnote{In the framework of \cite{wu2020viscosity}, $u$ does not depend on $x$, but the extension is straightforward.})
\begin{align*}
U(t,x,\m) &= U\big(\t, X_\t^{t,x,\m,\hat \t}, \hat \m_\t\big) - \int_t^\t (\pa_t + \cL^x) U\big(s, X_s^{t,x,\m,\hat \t}, \hat \m_s\big) \mathrm{d}s - \int_t^\t \si \pa_x U\big(s, X_s^{t,x,\m,\hat \t}, \hat \m_s\big)\mathrm{d}W_s^0  \\
&\quad - \int_t^\t \int_{\cC^\smallfont{d}} \cL^{\o} \d_m U\big(s,X_s^{t,x,\m,\hat \t},\hat \m_s,\o\big) \hat I[U](s,\o,\hat \m) \hat \m_s(\mathrm{d}\o) \mathrm{d}s,
\end{align*}
where $\hat \m_s \coloneqq  \cL(X_s^{t,\m,\hat \t})$ for all $s\in[0,T]$. We deduce from classical localisation arguments, and from the fact that $u$ is a super-solution of \Cref{master-equation} that 
\[
 U(t,x,\m) \ge  \dbE^{\P^\smallfont{0}}\Big[ g\big(\t,X_{\t}^{t, x, \m, \hat \t}, \cL(X_s^{t, \m, \hat \t})|_{ s = \t}\big)  \Big],\; \mbox{for all $\t \in \cT_{t,T}$.} 
 \] 
By definition of $\hat I[u]$, we also obtain by Itô's formula and localisation arguments
\[
U(t,x,\m) = \dbE^{\P^\smallfont{0}}\Big[ g\big(\hat \t_t,X_{\hat \t_\smallfont{t}}^{t, x, \m, \hat \t}, \cL(X_s^{t, \m, \hat \t})|_{s = \hat \t_\smallfont{t}}\big)  \Big],
\]
where $\hat \t_t $ is the $[t,T]$-valued stopping time with survival process $\hat I[u](t,X_t^{t,x,\m, \hat \t}, \hat \m_t) |_{ I_{\smallfont{t}\smallfont{-}} = 1}$. Observing that $\hat \t_0 = \hat \t$, we conclude by taking $t=0$ in the above equality that $\hat \t$ is a strong MFE for \eqref{strong-MFG}.
\end{proof}

\begin{remark}
If we only look for \emph{MFE} among first optimal stopping times $($\emph{i.e.} the first time the state process hits its Snell envelop$)$, then the master equation characterising the \emph{MFE} is, for $(t,x,\mu)\in[0,T)\times\R^d\times\Pc_p(\Cc^d)$
\begin{align}\label{master-equation-first}
\min\bigg\{ - \big(\pa_t + \cL^x\big) U(t,x,\m) - \int_{\cC^\smallfont{d}}  \cL^{\o} \d_m U(t,x,m,\o)\hat I^1[U](t,\o, \m) \m(\mathrm{d}\o),\; U(t,x,\m) - g(x,\m) \bigg\} = 0, \nonumber
\end{align}
with $\hat I^1[u](t,\o, \m) \coloneqq  \mathbf{1}\big(\{ \min_{\smallfont{0} \smallfont{\le}\smallfont{s} \smallfont{\le} \smallfont{t}}\{U(s,\o_\smallfont{s},\m) - g(\o_\smallfont{s},\m_\smallfont{s})\} > 0\}\big)$, and terminal condition $U(T,\cdot)=g$.
\end{remark}
The assumptions of \Cref{theorem:existence-strong-MFE,theorem:verification} have very little chance to be satisfied in general; in particular, one can hardly expect the master equation to have smooth solutions. We therefore need to introduce an appropriate notion of weak solution for \eqref{master-equation}, a task that we leave for future research. For now, we introduce a weak version of our mean-field game, for which existence holds under much milder assumptions. 

\section{Mean-field game in weak formulation}\label{sec:weak-formulation}

\subsection{Formulation of the problem}

Let $\Omega \coloneqq  [0,T] \times \cC^d$ be the canonical space, $(\t, X)$ be the canonical process on $\Omega$, and $I$ be the survival process corresponding to $\t$, \emph{i.e.}, $I_t \coloneqq  \1_{\{t < \t\}}$, for all $t \in [0,T]$. We endow $\Omega$ with the filtration $\dbF \coloneqq  (\cF_t)_{0 \le t \le T}$ generated by $(X, I)$, with marginal filtrations denoted $\dbF^Z \coloneqq  (\cF^Z_t)_{0 \le t \le T}$ for $Z \in \{X, I\}$. Fix also $p \ge 1$ and $\l \in \cP_p(\dbR^d)$. 

\begin{definition}\label{def:P}{\rm
For $m \in \cP_p(\Omega)$, let $\cR(m)$ be the set of $\dbP \in \cP_p(\Omega)$ such that $\dbP_{X_\smallfont{0}} = \l$,
\[
M_\cdot \coloneqq  X_\cdot - \int_0^\cdot b(s, X, m)\mathrm{d}s,\;  \mbox{\rm and} \; N_\cdot \coloneqq  M_\cdot^2 - \int_0^\cdot (\si \si^\top)(s, X, m)\mathrm{d}s,
\]
are $(\dbP, \dbF)$-martingales and $\cF_t^I$ is $\P$--conditionally independent of $\cF_T$ given $\cF_t$, for all $t \in [0,T]$.}
\end{definition}


\begin{remark}\label{rem:independent}{
The fact that $\cF_t^I$ is conditionally independent of $\cF_T$ given $\cF_t$, for all $t \in [0,T]$, is equivalent to the following property: for all $\f : \cC^d \longrightarrow \dbR$ Borel-measurable, bounded and orthogonal to $\cF_t$ $($\emph{i.e.}, $\dbE^\dbP[\f(X)Z] = 0$ for all $\cF_t$-measurable $Z)$, and for any $\psi : [0,T] \longrightarrow \dbR$ continuous and supported on $[0,t]$, we have 
\[
\dbE^{\dbP}\big[\f(X)\psi(\t) \big] = 0,\;  \mbox{\rm for all $t \in [0,T]$}.  
\]
Although $\f$ does not have to be continuous, we can see that the above equality is stable by passing to weak limits. Indeed, assume that $(\dbP^n)_{n\in\N}$ converge weakly to $\dbP$ as $n \longrightarrow \infty$, and that each $\dbP^n$ satisfies the above equality. By Lusin's theorem, see \emph{e.g.} {\rm\citeauthor*{folland1999real} \cite[Theorem 7.10]{folland1999real}}, for all $\e > 0$, there exists a continuous and bounded $\f^\e$ such that $ \dbP[\f \neq \f^\e] < \e. $
Then we have
\begin{align*}
\big| \dbE^{\dbP}[\f(X)\psi(\t)] \big| \le \big| \dbE^{\dbP}[(\f - \f^\e)(X)\psi(\t)] + \dbE^{\dbP}[\f^\e(X)\psi(\t)] \big| 
&\le \e \lVert \psi \rVert_\infty + \Big| \lim_{n \to \infty} \dbE^{\dbP^n}[\f^\e(X)\psi(\t)] \Big| =  \e \lVert \psi \rVert_\infty,
\end{align*}
which provides the desired result by arbitrariness of $\e$.} 
\end{remark}

We now define the mean-field game. Let $J : \cP_p(\Omega) \times \cP_p(\Omega) \longrightarrow \dbR$. Given the aggregated behaviour of the continuous population of agents $m \in \cP_p(\Omega)$, a representative agent faces the problem
\begin{align}\label{eq:MFG}
V(m) \coloneqq  \sup_{\dbP \in \cR(m)} J(\dbP, m).
\end{align}
We also introduce $\cR^\star(m) \coloneqq  \{\dbP \in \cR(m) : J(\dbP, m) = V(m) \}$. 
\begin{definition}[Mean-field equilibrium]\label{def:MFE}{\rm
 We say $m \in \cP_p(\Omega)$ is a mean-field equilibrium $(${\rm MFE} for short$)$ if $m \in \cR^*(m)$.}
\end{definition}

\begin{remark}{
Similarly to the case of pure equilibria, the distribution of $X$ under an \emph{MFE} is that of a stopped McKean--Vlasov diffusion. 
}
\end{remark}

\subsection{Main results}

\begin{theorem}\label{theorem:existence}
Assume that $J$ is continuous in $m$ and concave in $\dbP$. Then there exists an \emph{MFE}. 
\end{theorem}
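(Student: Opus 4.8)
The plan is to set up a fixed-point argument à la Kakutani–Fan–Glicksberg on the map $m \longmapsto \Rc^\star(m)$, using the weak topology on $\Pc_p(\Omega)$ restricted to a well-chosen compact convex set, and relying on the concavity of $J$ in $\dbP$ to get convexity of $\Rc^\star(m)$. First I would exhibit a compact convex subset $\Kc \subset \Pc_p(\Omega)$ that is stable in the sense that $\Rc(m) \subset \Kc$ for every $m \in \Kc$: this is where the martingale characterisation in \Cref{def:P} does the work. Under any $\dbP \in \Rc(m)$ the canonical process $X$ is an Itô process with drift and diffusion coefficients controlled by the (at most linear) growth of $b$ and $\si$, so standard moment estimates (uniform in $m$, using that $\dbP_{X_0} = \l$ is fixed and the coefficients are Lipschitz hence of linear growth in the measure argument too) give a uniform bound on $\dbE^{\dbP}[\sup_{[0,T]}|X_t|^{p'}]$ for some $p' > p$, together with a uniform Kolmogorov-type modulus-of-continuity estimate; combined with the fact that $\t \in [0,T]$ is automatically tight, this yields relative compactness of $\{\Rc(m) : m \in \Pc_p(\Omega)\}$ in the $\Pc_p$-topology via Prokhorov plus uniform integrability of the $p$-th moments. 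So I would take $\Kc$ to be the closed convex hull of $\bigcup_m \Rc(m)$, which is $\Pc_p$-compact and convex.

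Next I would verify the hypotheses of the set-valued fixed-point theorem for the correspondence $\Phi : \Kc \rightrightarrows \Kc$, $\Phi(m) \coloneqq \Rc^\star(m)$. Nonemptiness: for fixed $m$, $\Rc(m)$ is nonempty (it contains, e.g., the law of a weak solution of the SDE with coefficients $b(\cdot,\cdot,m), \si(\cdot,\cdot,m)$ together with an independent randomised stopping time), and it is $\Pc_p$-compact — closedness under weak limits of the two martingale conditions is routine, and the conditional-independence condition is closed under weak limits precisely by the argument already spelled out in \Cref{rem:independent}. Since $J(\cdot, m)$ is upper semicontinuous (indeed continuous in $m$ and concave — hence continuous — in $\dbP$, at least we may assume u.s.c. in $\dbP$; concavity on a compact convex set gives continuity on the relative interior and u.s.c. is enough) on the compact set $\Rc(m)$, the argmax set $\Rc^\star(m)$ is nonempty; and it is convex because $\Rc(m)$ is convex (each defining condition is linear in $\dbP$) and $J(\cdot,m)$ is concave, so the set where the concave function attains its maximum is convex. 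Closedness of the graph of $\Phi$: take $m_n \to m$, $\dbP_n \in \Rc^\star(m_n)$, $\dbP_n \to \dbP$; the defining conditions for $\Rc(m_n)$ pass to the limit (the drift/diffusion terms converge because $b, \si$ are continuous in $t$ and Lipschitz, hence continuous, in $(x,m)$, and $m_n \to m$ in $\Pc_p$), so $\dbP \in \Rc(m)$; and $J(\dbP,m) = \lim J(\dbP_n, m_n) = \lim V(m_n) \ge \limsup J(\Qbf, m_n) = J(\Qbf,m)$ for any fixed $\Qbf$, using joint continuity of $J$, which forces $\dbP \in \Rc^\star(m)$.

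Then Kakutani–Fan–Glicksberg (see e.g. the set-valued fixed-point theorem for upper hemicontinuous, nonempty-convex-compact-valued correspondences on a compact convex subset of a locally convex space) yields $\hat m \in \Kc$ with $\hat m \in \Phi(\hat m) = \Rc^\star(\hat m)$, i.e. $(\hat m, \hat m)$ is an MFE in the sense of \Cref{def:MFE}. The main obstacle I anticipate is the compactness/tightness step: one must control moments of $X$ under $\Rc(m)$ uniformly over all $m$ ranging in the (not yet constructed) set $\Kc$, which is mildly circular and is resolved by first deriving an a priori bound valid for all $m \in \Pc_p(\Omega)$ with the correct $p$-th moment — here one uses that the Lipschitz property of $b, \si$ in the measure variable gives linear growth $|b(t,x,m)| + |\si(t,x,m)| \le C(1 + |x| + \cW_p(m,\d_0))$, and then one restricts to a sublevel set $\{m : \int_\Omega(|\t|^p + \lVert X \rVert_\infty^p)\,\mathrm{d}m \le R\}$ with $R$ large enough to be self-consistent — together with checking that the rather delicate conditional-independence constraint genuinely survives passage to weak limits, which fortunately the paper has already isolated in \Cref{rem:independent}. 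A secondary technical point is ensuring $J$'s concavity is used correctly to get convex-valuedness without assuming strict concavity; plain concavity suffices since the maximiser set of a concave function over a convex set is convex.
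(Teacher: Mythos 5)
Your overall strategy---Kakutani--Fan--Glicksberg applied to $m\mapsto\cR^\star(m)$ on a compact convex subset of $\cP_p(\Omega)$, with nonemptiness/compactness of $\cR(m)$ from tightness plus stability of the martingale and conditional-independence conditions under weak limits, convex-valuedness from concavity of $J(\cdot,m)$, and upper-hemicontinuity from a closed-graph/Berge argument---is exactly the paper's \textbf{Step 1}. That part of your proposal is sound and matches the paper.

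The genuine gap is in how you handle unbounded $b$ and $\si$. The theorem only assumes Lipschitz continuity, so $|b(t,x,m)|+|\si(t,x,m)|\le C(1+|x|+\cW_p(m,\d_0))$, and the union $\bigcup_{m\in\cP_p(\Omega)}\cR(m)$ is \emph{not} relatively compact: as $m$ ranges over $\cP_p(\Omega)$ the characteristics of $X$ under elements of $\cR(m)$ are unbounded. Your proposed fix---restricting to a sublevel set $\Kc_R=\{m:\int_\Omega(|\t|^p+\lVert X\rVert_\infty^p)\,\mathrm{d}m\le R\}$ chosen ``self-consistently''---does not work in general. The Gr\"onwall estimate gives, for $\dbP\in\cR(m)$ with $m\in\Kc_R$, a bound of the form $\dbE^{\dbP}[\lVert X\rVert_\infty^p]\le A+BR$ where $B$ involves the Lipschitz constant in the measure variable and a factor $e^{CT}$; the invariance $\cR(\Kc_R)\subset\Kc_R$ requires $A+BR\le R$, which has no solution $R$ unless $B<1$. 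Unlike the genuine McKean--Vlasov case, here $m$ is a free parameter rather than the law of $X$, so the measure term cannot be absorbed into the Gr\"onwall iteration. This is precisely why the paper proceeds differently in its \textbf{Step 2}: it truncates $(b,\si)$ at level $n$ (for which the invariant compact set $\hat\cP_L$ with $L=n$ exists unconditionally), obtains an MFE $\dbP^n$ for each truncated game, proves tightness of $(\dbP^n)_n$ via Aldous's criterion and $p$-uniform integrability, passes the martingale problem and the conditional-independence property to the limit, and then---the substantive missing piece in your proposal---re-establishes \emph{optimality} of the limit point by taking an arbitrary $\dbP'\in\cR(\dbP)$, realising it on an enlarged space, constructing strong solutions $\tilde X^n$ of the truncated SDEs driven by the same Brownian motion so that $\tilde\dbP_{(\tilde\t,\tilde X^n)}\in\cR_n(\dbP^n)$ converges to $\dbP'$ in $\cW_p$, and using continuity of $J$ to conclude $J(\dbP,\dbP)\ge J(\dbP',\dbP)$. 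Without either this limiting argument or an additional boundedness/smallness hypothesis, your proof only establishes the theorem for bounded coefficients.
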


As we are about to state a convergence result as $N$ goes to infinity, we shall emphasise the dependence of $\bX$ on $N$. Thus, given $\btau \in \cT_{0,T}^N$, we denote by $\bX^{N, \btau} \coloneqq  (X^{1,N,\btau}, \dots, X^{N,N,\btau})$ the process with dynamics as in \Cref{players-dyn}, with symmetric drift and diffusion coefficients \eqref{b-sig-sym}, with initial values $\bX_0^{N, \btau} = (\xi_1, \dots, \xi_N)$, where the $(\xi_k)_{k \in\N^\smallfont{\star}}$ are $\P$--i.i.d. copies of $X_0$.

\begin{theorem}\label{theorem:convergence}
Assume $J$ writes 
\[
J(\dbP, m) = \dbE^{\dbP}\big[g(\t, X, m)\big], 
\]
where $g : \Omega \times \cP_p(\Omega) \longrightarrow \dbR$ is Lipschitz-continuous in $x$ and $m$, and let $\hat m$ be an \emph{MFE} for the mean-field game corresponding to $J$. {Also assume that $\si \si^\top$ is invertible.} Then there exist two sequences $(\btau^N)_{N \in\N^\smallfont{\star}}$ and $(\e_N)_{N\in\N^\smallfont{\star}}$ respectively valued in $\cT_{0,T}$ and $(0,+\infty)$, such that $(\e_N)_{N\in\N^\star}$ decreases to $0$ as $N$ goes to $\infty$, for any $N\in\N^\star$, $\btau^N$ is an $\e_N$--Nash equilibrium for the $N$-player game \eqref{k-pb-dyn}, and $m^N(\btau^N, \bX^{N, \btau}) \underset{N \to \infty}{\longrightarrow} \hat m$, $\dbP$--{\rm a.s.}
\end{theorem}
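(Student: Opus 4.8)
The plan is to transport the given mean-field equilibrium $\hat m$ to the $N$-player game through a \emph{distributed} (symmetric) strategy, and to absorb the error into two pieces: a propagation-of-chaos estimate bounding the distance between the realised empirical measure and $\hat m$, and the elementary fact that a single player perturbs the empirical measure by only $O(1/N)$.

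First I would fix a convenient realisation of the equilibrium. As $(\hat m,\hat m)$ is an MFE, $\hat m\in\cR^\star(\hat m)\subset\cR(\hat m)$, so under $\hat m$ the canonical path $X$ is a weak solution of the SDE with drift $b(\cdot,X,\hat m)$ and diffusion $(\si\si^\top)(\cdot,X,\hat m)$, $X_0\sim\l$, while by the conditional-independence requirement in \Cref{def:P} the survival process $I$ of $\t$ is a randomised stopping time relative to $\dbF^X$. Using the invertibility of $\si\si^\top$, I would re-express $X$ as a strong solution of an SDE driven by a Brownian motion whose filtration is $\dbF^X$, and encode the stopping rule through its Baxter--Meyer increasing-process representation $\hat\t=\inf\{t: A_t\ge U\}$, with $A$ an $\dbF^X$-adapted, $[0,1]$-valued, non-decreasing path functional and $U$ an independent uniform variable; if one insists on strategies measurable with respect to the Brownian filtration only, one replaces $\hat\t$, for each $\delta>0$, by a pure $\dbF^X$-stopping time $\hat\t^\delta$ with $\cW_p\big(\dbP^0\circ(\hat\t^\delta,X)^{-1},\hat m\big)\le\delta$, which exists since pure stopping times are dense among randomised ones.

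Next, on the $N$-player space (enlarged, if needed, by i.i.d.\ uniforms $U_1,\dots,U_N$ independent of $\bW$), I would let player $k$ apply the rule above to his own observed trajectory $X^{k,N,\btau^N}$ (and $U_k$); the tuple $\btau^N=(\t_1^N,\dots,\t_N^N)$ is then defined implicitly as the solution of the coupled system \eqref{players-dyn}--\eqref{b-sig-sym}, whose well-posedness follows from the Lipschitz hypotheses on $b,\si$ exactly as in \Cref{sec:N-player}, the stopping rule entering the dynamics only through the bounded factors $I^k$. I would then prove $m^N(\btau^N,\bX^{N,\btau^N})\to\hat m$ in $\cW_p$, $\dbP$--a.s., with a rate $\dbE\big[\cW_p\big(m^N(\btau^N,\bX^{N,\btau^N}),\hat m\big)\big]\le\rho_N\to0$: couple the $N$-player system with $N$ independent copies $(\bar X^k,\hat\t^k)$ of the pair (McKean--Vlasov solution, equilibrium stopping time), bound $\dbE\big[\sup_s|X^{k,N,\btau^N}_s-\bar X^k_s|^p\big]$ by a Gronwall argument fed by $\dbE[\cW_p^p(m^N(\bar X),\hat\m)]$ and by the discrepancy between $\t_k^N$ and $\hat\t^k$ (computed along different paths), and invoke a law of large numbers for the exchangeable pairs $(\t_k^N,X^{k,N,\btau^N})$; this is the point where the propagation-of-chaos analysis of \cite{talbi2022finite} for stopped McKean--Vlasov equations must be adapted to the present interaction.

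Finally I would verify the $\e_N$-Nash property. Fix $k$ and any deviation $\t\in\cT_{0,T}$ for player $k$, the others keeping $\btau^{N,-k}$. As only one survival factor changes and $b,\si$ are Lipschitz, the state system under this deviation differs from $\bX^{N,\btau^N}$ only through the empirical measure, at $\cW_p$-distance $O(1/N)$ from the undeviated one, uniformly in $\t$; Lipschitz-continuity of $g$ in $(x,m)$ then bounds player $k$'s deviation payoff by $V(\hat m)+C(1/N+\rho_N)$, since a one-player deviation is, up to $O(1/N+\rho_N)$, a single-player response against $\hat m$ and $V(\hat m)=J(\hat m,\hat m)$ at equilibrium. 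Conversely, keeping $\t_k^N$ yields a payoff at least $J(\hat m,\hat m)-C\rho_N$ by the previous step and, again, the Lipschitz property of $g$. Taking $\e_N\coloneqq C(1/N+\rho_N)$ closes the proof, and $\e_N\downarrow0$. The main obstacle is this propagation-of-chaos step combined with the randomisation bookkeeping: a randomised stopping time is not a continuous path functional, so the stability estimate must be carried out through the increasing-process representation (which \emph{does} vary continuously with the path) or through the pure approximants $\hat\t^{\delta_N}$, and one must check that neither the auxiliary uniforms nor the approximation spoils the $\e_N$-Nash conclusion; moreover, the stopping times appear inside the payoff's empirical measure, so the law of large numbers has to run jointly in $(\t,X)$, and the Gronwall estimate must absorb the stopping-time discrepancies created by the intrinsic (through the $I^k$) coupling of trajectories.
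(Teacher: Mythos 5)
Your overall architecture is the right one and matches the paper's: realise the MFE on a suitably enlarged space, transplant it into the $N$-player game through a symmetric strategy profile, prove propagation of chaos by coupling with i.i.d.\ copies and Gr\"onwall plus the law of large numbers, control deviations via the Lipschitz continuity of $g$ and the optimality of $\hat m$, and finish by approximating with pure stopping times of the Brownian filtration (the paper does this last step via \Cref{prop:pure-stopping}, exactly as you anticipate). The decomposition of the deviation gain into a ``chaos'' error plus the mean-field optimality gap is also the paper's Step 2, where the supremum over deviating stopping times is dominated by a pathwise supremum over deterministic times inside the expectation.

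There is, however, a genuine gap in your construction of $\btau^N$. You define $\t_k^N$ by applying the equilibrium stopping rule (the Baxter--Meyer increasing process $A$, or a pure approximant) to player $k$'s \emph{realised} trajectory $X^{k,N,\btau^N}$, so that $\btau^N$ is only defined implicitly as a fixed point of the coupled system: the survival indicator $\1_{\{s<\t_k^N\}}$ multiplies the coefficients of the very SDE whose solution determines $\t_k^N$. Well-posedness of such endogenously stopped systems does \emph{not} ``follow from the Lipschitz hypotheses exactly as in \Cref{sec:N-player}'': there the stopping times are exogenous, whereas here the map from path to stopping time is a discontinuous (hitting-time) functional, and the paper itself flags the analogous stopped McKean--Vlasov equation \eqref{stoppedMKV} as an open well-posedness problem. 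The same issue resurfaces in your Gr\"onwall step, where you must control ``the discrepancy between $\t_k^N$ and $\hat\t^k$ computed along different paths'', which is not Lipschitz in the paths. The paper avoids both difficulties at once: it takes $\btau^N\coloneqq(\t_1,\dots,\t_N)$ to be i.i.d.\ copies of the equilibrium stopping time, each attached to an i.i.d.\ copy $X^k$ of the \emph{limit} trajectory, and feeds these fixed random times exogenously into the $N$-player dynamics. Then the indicators $I^k$ are literally the same random processes in the $N$-player system and in the limiting i.i.d.\ system, they cancel in the coupling estimate \eqref{ineq1}, and the propagation of chaos reduces to a standard Gr\"onwall plus law-of-large-numbers argument; the fact that these $\t_k$ are not $\dbF^{W^1,\dots,W^N}$--stopping times is repaired only at the very end by the density of pure stopping times. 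You should replace your fixed-point construction by this exogenous one; the rest of your argument then goes through essentially as you describe.
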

 
\begin{remark}
The extension of $g$ from a Markovian reward in \eqref{k-pb-dyn} to a path-dependent one above is straightforward.
\end{remark}

Although the question of uniqueness of MFE in terms of controls remains very challenging, we are able to find conditions guaranteeing uniqueness in terms of the value function, in the case where $b$ and $\si$ do not depend on the measure-valued variable, and where the criterion to maximise takes a special form (inspired from \citeauthor*{bouveret2020mean} \cite[Theorem 4.4]{bouveret2020mean}). As expected, this boils down to a(n) (anti-)monotonicity condition.
\begin{theorem}\label{theorem:uniqueness}
Assume $b$ and $\si$ do not depend on $m$, and that $J$ takes the form
\[
J(\dbP, m) \coloneqq  \dbE^{\dbP}\big[g(\t, X)F(m[g])\big],
\] 
for some $g : [0,T] \times \Omega \times \cP_p(\Omega)\longrightarrow \R$ with $p$-polynomial growth in $x$. Assume further that $F$ is non-increasing. Then all the \emph{MFEs} have the same value for the mean-field game \eqref{eq:MFG}.
\end{theorem}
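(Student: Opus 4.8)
The plan is to argue by contradiction, exploiting the anti-monotonicity encoded by $F$ being non-increasing. Suppose $\hat m^1$ and $\hat m^2$ are two MFEs with equilibrium probabilities $\dbP^1 = \hat m^1$ and $\dbP^2 = \hat m^2$, and assume the two values $V(\hat m^1)$ and $V(\hat m^2)$ differ, say $V(\hat m^1) > V(\hat m^2)$. Since $b$ and $\si$ do not depend on the measure argument, the martingale-problem set $\cR(m)$ is in fact \emph{the same set} $\cR$ for every $m$ (the drift and covariation constraints no longer see $m$), and the conditional-independence constraint on $\cF^I$ is also $m$-free. This is the structural simplification that makes the argument go through: the feasible set over which the representative agent optimises is common to both equilibria, and only the objective $J(\cdot, m)$ changes with $m$.

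The key computation is to evaluate the equilibrium values using the scalar $m[g] \coloneqq \int_\Omega g(\t, X, m)\,m(\mathrm{d}\t,\mathrm{d}X)$. Write $a_i \coloneqq \hat m^i[g\,;\,\hat m^i] = \dbE^{\hat m^\smallfont{i}}[g(\t, X, \hat m^i)]$ for $i \in \{1,2\}$, where I abuse notation to indicate that $g$ is evaluated along the equilibrium measure. Because $\dbP^i = \hat m^i \in \cR^\star(\hat m^i)$, we have
\[
V(\hat m^i) = J(\hat m^i, \hat m^i) = \dbE^{\hat m^\smallfont{i}}\big[g(\t, X, \hat m^i)\big] F(\hat m^i[g]) = a_i F(a_i).
\]
Here I am using that the equilibrium condition $\dbP^i = \hat m^i$ collapses the outer and inner copies of the measure. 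Now use optimality \emph{across} equilibria: since $\dbP^2 \in \cR = \cR(\hat m^1)$ is a feasible competitor in the problem defining $V(\hat m^1)$, and likewise $\dbP^1 \in \cR(\hat m^2)$, we get the two inequalities
\[
a_1 F(a_1) = V(\hat m^1) \ge J(\dbP^2, \hat m^1) = \dbE^{\hat m^\smallfont{2}}\big[g(\t, X, \hat m^1)\big] F(\hat m^1[g]),
\qquad
a_2 F(a_2) = V(\hat m^2) \ge \dbE^{\hat m^\smallfont{1}}\big[g(\t, X, \hat m^2)\big] F(\hat m^2[g]).
\]
The remaining work is to show these two inequalities, combined with $F$ non-increasing, force $a_1 F(a_1) = a_2 F(a_2)$, i.e. force equality of the values. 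The natural route is to add the two inequalities after rewriting the right-hand sides; the cross terms $\dbE^{\hat m^\smallfont{2}}[g(\cdot,\hat m^1)]$ and $\dbE^{\hat m^\smallfont{1}}[g(\cdot,\hat m^2)]$ must be controlled.

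The main obstacle is precisely this control of the cross terms: a priori $\dbE^{\hat m^\smallfont{2}}[g(\t,X,\hat m^1)]$ need not equal $a_1$ or $a_2$, so the inequalities do not telescope for free. I expect the resolution to require the special product structure $g(\t,X)F(m[g])$ as written in the statement — that is, $g$ inside the expectation does \emph{not} depend on $m$, only the factor $F(m[g])$ does (the theorem statement writes $g : [0,T]\times\Omega\times\cP_p(\Omega)\to\R$ but the intended model, following \cite[Theorem 4.4]{bouveret2020mean}, is $g = g(\t,X)$ with the measure-coupling entering solely through $F(m[g])$). Under that reading, $\dbE^{\hat m^\smallfont{2}}[g(\t,X)] = a_2 / F(\hat m^2[g])$ up to the scalar factor, the cross terms become explicit scalars, and the inequalities reduce to: $a_1 F(a_1) \ge a_2 F(a_1)$ and $a_2 F(a_2) \ge a_1 F(a_2)$, whence $(a_1 - a_2) F(a_1) \ge 0$ and $(a_2 - a_1) F(a_2) \ge 0$. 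Adding gives $(a_1 - a_2)(F(a_1) - F(a_2)) \ge 0$, while $F$ non-increasing gives $(a_1 - a_2)(F(a_1) - F(a_2)) \le 0$; hence $(a_1-a_2)(F(a_1)-F(a_2)) = 0$, and a short case analysis (using the two one-sided inequalities and monotonicity of $F$) yields $a_1 F(a_1) = a_2 F(a_2)$, i.e. $V(\hat m^1) = V(\hat m^2)$. I would present the argument cleanly for the product form and remark that this is the setting of the cited reference; handling a genuinely $m$-dependent $g$ inside the expectation would need an additional monotonicity hypothesis on $g$ itself and is not claimed here.
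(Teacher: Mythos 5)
Your argument is correct and is essentially the paper's own proof: both use that $\cR(m^1)=\cR(m^2)$ since $b$ and $\sigma$ are measure-free, plug each equilibrium into the other's optimisation problem to get the two one-sided inequalities $ (a_1-a_2)F(a_1)\ge 0$ and $(a_2-a_1)F(a_2)\ge 0$, and combine their sum with the non-increase of $F$ to force $(a_1-a_2)(F(a_1)-F(a_2))=0$ and hence $a_1F(a_1)=a_2F(a_2)$. Your reading of the product form, with $g=g(\tau,X)$ inside the expectation and the measure entering only through $F(m[g])$, is exactly the one the paper's proof relies on.
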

\begin{proof} 
Let $m^1$ and $m^2$ be two MFE. Since $b$ and $\si$ do not depend on the measure-valued argument, we have $\cR(m^1) = \cR(m^2)$. Therefore, the optimality conditions provide
\[
J(m^1, m^1) \ge J(m^1, m^2),\; \mbox{and}\; J(m^2, m^2) \ge J(m^2, m^1).
\]
 Observing that $J(m, m') = F(m'[g])m[g]$ for all $m, m' \in \cP_p(\Omega)$, we then have:
\[
\big(F(m^2[g]) - F(m^1[g])\big)\big(m^2[g] - m^1[g]\big) \ge 0.
\]
Using that $F$ is non-increasing this leads to
\[
\big(F(m^2[g]) - F(m^1[g])\big)\big(m^2[g] - m^1[g]\big) = 0, 
\]
and finally $F(m^2[g]) = F(m^1[g])$ and $m^2[g] = m^1[g]$, which implies that $J(m^1, m^1) = J(m^2, m^2)$.
\end{proof}

\subsection{Connection with randomised stopping times}
{\color{black} As claimed in the introduction, our notion of weak strategy can be very naturally interpreted as a randomised stopping policy. } Recall \citeauthor*{meyer1978convergence}'s definition of randomised stopping times (see \cite{meyer1978convergence} and also \citeauthor*{baxter1977compactness} \cite{baxter1977compactness}): given a filtered probability space $(\Omega^0, \Fc^0,\dbF^0, \dbP^0)$, a randomised $\F^0$--stopping time is defined as a probability measure $\n$ on $[0,T] \times \Omega^0$ (with $T = \infty$ in \cite{meyer1978convergence}) such that

\medskip
{$\quad(i)$} $\n([0,T], \cdot) = \dbP^0;$

\smallskip
{$\quad(ii)$} the non-decreasing process $A : [0,T] \times \Omega^0 \longrightarrow [0,1]$ defined by $A_t(\omega) := \m_\o([0,t))$, where $\m_\cdot : \Omega^0 \to \cP([0,T])$ is the disintegrated measure of $\n$ with respect to $\dbP^0$, is $\dbF^0$-adapted and satisfies $A_T = 1$, $\dbP^0$--a.s.  

\begin{proposition}
Fix $m \in \cP_p(\cC^d)$. Then any $\dbP \in \cR(m)$ is a randomised stopping time on $(\cC^d, \dbF^X, \dbP_X)$, where $\dbF^X$ is the filtration generated by the canonical process $X$.
\end{proposition}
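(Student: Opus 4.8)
The plan is to check, for an arbitrary $\dbP\in\cR(m)$, the two conditions in \citeauthor*{meyer1978convergence}'s definition with the base space $(\Omega^0,\dbF^0,\dbP^0)$ taken to be $(\cC^d,\dbF^X,\dbP_X)$. The candidate randomised stopping time is $\dbP$ itself, seen as a probability measure on $[0,T]\times\cC^d=\Omega$. Condition $(i)$ is immediate: by definition of the canonical process, the second marginal of $\dbP$ is $\dbP_X$, so $\dbP([0,T]\times\cdot)=\dbP_X$.

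For condition $(ii)$ I would disintegrate $\dbP$ with respect to its second marginal: since $\cC^d$ is Polish there is a Borel kernel $\o\longmapsto K(\o,\cdot)\in\cP([0,T])$ with $\dbP(\mathrm{d}t,\mathrm{d}\o)=K(\o,\mathrm{d}t)\,\dbP_X(\mathrm{d}\o)$. Setting $A_t(\o)\coloneqq K(\o,[0,t])$ and choosing the usual non-decreasing, right-continuous version, we have $A_T(\o)=K(\o,[0,T])=1$ for $\dbP_X$-a.e.\ $\o$, since $K(\o,\cdot)$ is a probability measure carried by $[0,T]$. So the only substantial point left is the $\dbF^X$-adaptedness of $A$, that is, the $\cF^X_t$-measurability of $A_t$ for every $t$.

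The key observation is that, $\dbP$-a.s., $A_t(X)=\dbP[\t\le t\mid\cF^X_T]=1-\dbE^{\dbP}[I_t\mid\cF^X_T]$, because $\cF^X_T=\sigma(X)$ and $\{\t\le t\}=\{I_t=0\}$. Hence $\dbF^X$-adaptedness of $A$ amounts to $\dbE^{\dbP}[I_t\mid\cF^X_T]=\dbE^{\dbP}[I_t\mid\cF^X_t]$ for all $t$, and this is precisely where the conditional-independence constraint built into \Cref{def:P} is used. I would spell it out via the reformulation of \Cref{rem:independent}: testing against a bounded Borel $\f$ on $\cC^d$ that is orthogonal to $\cF_t$, I write $\1_{\{\t\le t\}}=1-I_t$ and approximate $\1_{\{\t\le t\}}$ from below by continuous $\psi_n$ supported on $[0,t]$; \Cref{rem:independent} gives $\dbE^{\dbP}[\f(X)\psi_n(\t)]=0$, and passing to the limit together with $\dbE^{\dbP}[\f(X)]=0$ yields $\dbE^{\dbP}[\f(X)I_t]=0$, i.e.\ $\dbE^{\dbP}[I_t\mid\cF^X_T]$ is $\cF^X_t$-measurable. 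Thus $A_t(X)=1-\dbE^{\dbP}[I_t\mid\cF^X_t]$ is $\cF^X_t$-measurable, $A$ is $\dbF^X$-adapted, and $\dbP$ is a randomised $\dbF^X$-stopping time on $(\cC^d,\dbF^X,\dbP_X)$, as claimed.

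I expect the adaptedness step to be the main obstacle: one must pass carefully between the joint filtration $\dbF$ (on which the conditional-independence condition is phrased) and the marginal filtration $\dbF^X$, and one must handle the boundary indicator $\1_{\{\t=t\}}$ — a continuous $\psi$ supported on $[0,t]$ necessarily vanishes at $t$, so the identity is first obtained for all $t$ outside the at most countable set of atoms of the law of $\t$ under $\dbP$, and then extended to every $t$ by right-continuity of $s\mapsto A_s$ and right-continuity of $\dbF^X$. By contrast, the martingale-problem part of \Cref{def:P} plays no role here; only the conditional-independence property is needed.
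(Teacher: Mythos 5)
Your proof is correct and follows essentially the same route as the paper's: both verify Meyer's two conditions directly, with condition $(i)$ immediate from the marginals and the adaptedness in $(ii)$ obtained by testing against bounded $\f$ orthogonal to $\cF_t$ and $\psi$ supported on $[0,t]$, exactly as licensed by the conditional-independence requirement in \Cref{def:P} and its reformulation in \Cref{rem:independent}. You are in fact more careful than the paper on two points it glosses over --- the boundary term $\1_{\{\t=t\}}$ (handled via the atoms of the law of $\t$ and right-continuity of $A$) and the derivation of $A_T=1$ directly from the disintegration kernel being a probability measure --- so no changes are needed.
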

\begin{proof}
Let $\dbP \in \cR(m)$. We obviously have $\dbP([0,T], \cdot) = \dbP_X$. We now prove $(ii)$. Fix $t \in [0,T]$, let $\f : \cC^d \longrightarrow \dbR$ be bounded and orthogonal to $\cF_t$, and $\psi : [0,T] \longrightarrow \dbR$ be supported on $[0,t]$. We have
\begin{align*}
\dbE^\dbP[\f(X)\psi(\t)]  = 0.
\end{align*}
since $\cF_t^I$ is conditionally independent of $\cF_T$ given $\cF_t$ under $\dbP$. As $\f$ and $\psi$ are arbitrary, this implies that the process $A$ is $\dbF$-adapted, and finally $\dbF^X$-adapted as it is a function of $X$ only. Also observe that 
\[
 \dbE^{\dbP_\smallfont{X}}[\f(X)] =  \dbE^\dbP[\f(X)] =  \dbE^{\dbP_\smallfont{X}}[\f(X)A_T],
 \]
which also implies by arbitrariness of $\f$ that $A_T = 1$, $\dbP_X$--a.s. 
\end{proof}

{\color{black}
\begin{remark}
It is well-known that randomised stopping can prove being very helpful in the study of optimal stopping problems, in particular regarding the existence of optimal stopping strategies, especially thanks to their compactness property, see the references in {\rm\Cref{foot}}. The compactness of $\cR(m)$ for all $m \in \cP_p(\Omega)$ plays of course a crucial role in the large scope of our existence result for mean-field equilibria. 
\end{remark}}

\subsection{Comparison with the literature}\label{sec:comparison}

Mean-field games of stopping times have received strong attention over the past few years. In this section, we explain how our results relate to the existant literature. 

\medskip
The work that is the closest to ours is the one by \citeauthor*{carmona2017mean} \cite{carmona2017mean}. In this paper, the authors investigate a mean-field game of stopping times in the presence of common noise, defining two notions of mean-field equilibria, a strong and a weak one. The strong formulation is the same as ours, besides the fact that we do not consider common noise and that we allow for interaction through the distribution of $X$, including in the dynamics of the players. While being close to ours, their weak formulation is significantly different: indeed, using $\tilde \Omega \coloneqq  [0,T] \times \cC^d \times \cP_p([0,T] \times \cC^d)$ as canonical space with canonical process $(\t, W, m)$, they define the problem faced by a representative player by
\[
V_0 \coloneqq  \sup_{\dbP \in \cR} \dbE^\dbP\big[F(\t, W, m)], 
\]
where $\cR$ is roughly speaking the set of probability measures on $\tilde \Omega$ such that $\t$ is a randomised stopping time and $W$ a standard Brownian motion. Here we omitted the common noise for the sake of clarity. Then, $\hat \dbP \in \cR$ is a weak MFE if it is optimal for the above problem, and if the fixed-point condition $m = \hat \dbP[(\t, W) \in \cdot | m]$ holds. The main difference with our setting is that the measure argument $m$ here is randomised (even without common noise), whereas it is always deterministic in our framework. Thus, our notion of weak equilibrium---that the authors of \cite{carmona2017mean} call \emph{strong MFE with weak stopping time}---is stronger than then one of \cite{carmona2017mean}. Note that this weaker notion of MFE naturally provides the convergence of Nash equilibria for the $N$-player games to the MFE, which is not as clear with our stronger definition. However, our notion of stopping time is easier to interpret in terms of stopping policy, as it simply consists in stopping the reward process using a randomised strategy. We believe there is value in ensuring clearer implementability, even this is at the price of not being necessarily able to prove an appropriate convergence of the $N$-player game to its mean-field counterpart. After all, the original intent of \citeauthor*{huang2003individual} \cite{huang2003individual,huang2006large,huang2007nash,huang2007large} and \citeauthor*{lasry2006jeux} \cite{lasry2006jeux,lasry2006jeux2,lasry2007mean} when they introduced mean-field games was essentially to use them to produce $\varepsilon$--Nash equilibria for the $N$-player game, a result which still holds with our formulation, see \Cref{theorem:convergence}.

\medskip
We also mention the work of \citeauthor*{bouveret2020mean} \cite{bouveret2020mean}, who define a notion of relaxed stopping time for mean-field games where the players interact through the states of players who have not exited the game yet. The interaction term appears in the running cost only. Given an initial state $\m_0^\star \in \cP(\dbR^d)$, the representative player faces the following problem:
\[
\sup_{(m_\smallfont{t})_{\tinyfont{0} \tinyfont{\le} \tinyfont{t} \tinyfont{\le} \tinyfont{T}} \in \cA(\m_\smallfont{0}^\smallfont{\star})} \int_0^T \int_{\dbR^\smallfont{d}} f(t, x, \hat \m_t) \m_t(\mathrm{d}x), 
\]
where $\hat \m \coloneqq  (\m_t)_{0 \le t \le T}$ is the response of the other players, and $\cA(\m_0)$ the set of $m \coloneqq  (\m_t)_{0 \le t \le T}$ such that $\m_0 = \m_0^\star$ and
\begin{equation}\label{relaxed-controls-tankov}
 \int_{\dbR^\smallfont{d}} u(0,x)\m_0^\star(\mathrm{d}x) + \int_0^T \int_{\dbR^\smallfont{d}} (\pa_t + \cL^x)u(t,x) \m_t(\mathrm{d}x) \mathrm{d}t \ge 0,\; 
 \end{equation}
for all $u \in C^{1,2}([0,T] \times \dbR^d)$ such that $u \ge 0$ and $(\pa_t + \cL^x)u$ is bounded. This includes in particular the set of \emph{occupation measures}, \emph{i.e.} the set of measures of the form $\m_t(A) \coloneqq  \dbE\big[\1_A(X_t)\1_{\{t \le \t\}}]$ for all $A \in \cB(\dbR^d)$, with $X$ a standard diffusion process and $\t$ a stopping time. 

 We observe that any of our controls $\dbP$ induces an element of $\cA(m_0)$ (recall that $m_0$ is our initial distribution for $X$). Indeed, for all $\dbP \in \cR(\m_0)$ and 
 $u$ as above, we have by Itô's formula:
 \begin{align*}
0 \le \dbE^\dbP\big[u(\t, X_\t)\big] &= \dbE^\dbP\big[u(0, X_0) + \int_0^\t \cL^x u(t, X_t)dt\big] \\
  &= \dbE^\dbP\big[u(0, X_0) + \int_0^T  \cL^x u(t, X_t) I_t dt \big] \\
  &= \int_{\dbR^n} u(0, x)\m_0(dx) + \int_0^T \int_{\dbR^n} \cL^x u(t, x) m_t(dx,1)ds,
 \end{align*}
 which matches the requirement \eqref{relaxed-controls-tankov} with $\m_t := m_t(dx,1)$, where $m$ here denotes the joint law of $X$ and $I$ under $\dbP$.

 
 Therefore, our existence result \Cref{theorem:existence} implies existence in this setting. Note that this work has been extended to mixed control-and-stopping mean-field games in by \citeauthor*{dumitrescu2021control} \cite{dumitrescu2021control}, in which the authors allow for more general criteria and interaction through the coefficients $b$ and $\si$. If we omit the control part, our existence result also implies existence in their formulation. 

\medskip
Note also that the notion of relaxed equilibria closely relates to the notion of mixed strategies introduced by \citeauthor*{bertucci2018optimal} \cite{bertucci2018optimal}, who provides an analytic study of the mean-field games of stopping times when the interaction only occurs through the criterion to optimise. We dedicate the next section to an informal discussion on the representation of our problem by a system of coupled PDEs, and how this relates to \cite{bertucci2018optimal}.

\subsection{Analytic representation of the mean-field game}

Our mean-field game may also be characterised by a system of coupled PDEs. To see this, we can formally derive the Fokker--Planck equation satisfied by the densities of the optimal flow of marginals $\{\hat \m_t\}_{0 \le t \le T}$ along an MFE, whenever the latter exist. In fact, since $\hat \m$ is path-dependent in our context---recall that a stopping time, or rather its survival process, can be seen as a path-dependent control---, we are only able to derive a Fokker--Planck equation for the density of the joint law of a stopped McKean--Vlasov diffusion $\hat m_t \coloneqq  (\hat X_t, \hat I_t) \in \cP_p(\dbR^d \times \{0,1\})$, assuming it is well-defined. 
Indeed, for any smooth $\f : \dbR^d \to \dbR$, we have
\begin{align*}
\dbE\big[\f(\hat X_t) - \f(\hat X_0)\big] &= \dbE\bigg[\int_0^t \cL \f(s,  \hat X_s, \hat m_s) I_s \mathrm{d}s\bigg] =  \int_{\dbR^\smallfont{d}} \int_0^t \cL \f(s, x, \hat m_s) \hat m(s,x,1) \mathrm{d}s \mathrm{d}r,\; t\in[0,T],
\end{align*}
with $\cL \f(s, x, m) \coloneqq  b(s, x, m)\pa_x \f(x) + \frac{1}{2}\mathrm{Tr}[\si^2(s, x, m)\pa_{xx}^2 \f(x)]$, and, for $i\in\{0,1\}$, $\hat m(t,x,i)$ is the density of $\hat m(t,\mathrm{d}x,i)$ with respect to Lebesgue measure. Note that the extension of $b$ and $\si$ into functions of $m \in \cP_p(\dbR^d \times \{0,1\})$ is straightforward. This implies that we have, in the sense of distributions
\begin{align}\label{FP}
\pa_t \big(\hat m(t,x,1) + \hat m(t,x,0)\big) + \mathrm{div}\big(b(t,x,\hat m_t)\hat m(t,x,1)\big) - \frac 1 2 \partial_{xx}\big(\si^2(t,x,\hat m_t) \hat m(t,x,1)\big)= 0, \; \text{\rm for all $(t,x) \in [0,T] \times \dbR^d$.}
\end{align}

Let us now consider a game of the form $J(\dbP,m) = \dbE^\P\big[g(X_{\t}, \m_\t)]$, with $u : [0,T] \times \dbR^d \longrightarrow \dbR$ the corresponding value function and $\m$ the first marginal of $m$, and define for any $t\in[0,T]$, $\cC_t \coloneqq  \{x \in \dbR^d : u(t,x) > g(x, \hat \m_t)\}$, as well as $\cS_t \coloneqq  \cC_t^c$. 

\subsubsection{Pure strategies}  Let $U$ be a solution of the master equation \eqref{master-equation} corresponding to the Nash equilibrium:
$$ \hat I^1[U](t, \o, \m) := \1\big(\{\min_{0 \le s \le t} \big\{U(s,\o_s,\m) - g(s,\o_s,\m)\big\} > 0 \}\big) \ \mbox{for all $(t, \o, \m) \in [0,T] \times \Omega \times \cP_2(\dbR^d)$}.  $$ 
Let $\hat m$ ne the distribution of $(X^{t,x,\m, \hat \t^1}, \hat I^1[U](t, X^{t,x,\m, \hat \t^1}, \hat \m))$, with $\hat \m$ its first marginal, and introduce $u(t,x) := U(t,x,\hat \m_t)$. It is clear by the chain rule that $u$ is a solution to the standard obstacle problem:
$$ \min\{-(\pa_t + \cL^x)u(t,x), u(t,x) - g(x, \hat \m_t)\} = 0, \ u(T, \cdot) = g(\cdot, \hat \m_T). $$

Also, by definition of $\hat I^1[U](t, \o, \m)$ above, $\hat m(t,x,1)$ and $\hat m(t,x,0)$ are respectively supported on $\cC_t$ and $\cS_t$.
Therefore, the Fokker-Planck equation \eqref{FP} provides:
\begin{align}\label{eqm1m0}
\begin{cases}
\displaystyle \pa_t \hat m(t,x,1)  + \mathrm{div}\big(b(t,x,\hat m_t)\hat m(t,x,1)\big) - \frac 1 2 \partial_{xx}\big(\si^2(t,x,\hat m_t) \hat m(t,x,1)\big) = 0,\; \mbox{on $\cC_t$,} \\[0.5em]
\displaystyle \pa_t \hat m(t,x,0) = 0,\; \mbox{on ${\mathrm{int}}(\cS_t)$}.
\end{cases}
\end{align}
The first equation means that the players still in the game diffuse freely in the continuation region; on the other hand, the stopped players are `frozen', and stop diffusing until the end of the game. Also, since $\cC_t$ is non-increasing in $t$ (in the sense of inclusion), newly stopped players may never reach $\mathrm{int}(\cS_t)$, and remain at $\pa \cS_t$ until the end of the game, hence the second equality. Note that this does not mean that the distribution of the stopped players is constant: since $\cS_t$ is closed, we may only write the above equation in its interior, whereas at any time $t$, newly stopped players keep entering $\cS_t$ through its boundary $\pa \cS_t$.

\medskip
Of course, from the point of view of any player, the problem remains a standard optimal stopping one. Therefore, $(u, \hat m)$ satisfies the system
\begin{align}\label{system-pdes}
\begin{cases}
\displaystyle \min\{-(\pa_t + \cL^x)u, u - g\} = 0,\; \mbox{on $[0,T] \times \dbR^d$}, \\[0.5em]
\displaystyle \pa_t \hat m(t,x,1)  + \mathrm{div}\big(b(t,x,\hat m_t)\hat m(t,x,1)\big) - \frac 1 2 \partial_{xx}\big(\si^2(t,x,\hat m_t) \hat m(t,x,1)\big) = 0,\; \mbox{on $\cC_t$,} \\[0.5em]
\displaystyle \pa_t \hat m(t,x,0) = 0,\; \mbox{on ${\mathrm{Int}}(\cS_t)$}, \\[0.5em]
\displaystyle u(T,x) = g(x), \; \hat m(0^-,t,x) = m_{0-}(t,x),\; \mbox{on $[0,T] \times \dbR^d$.}
\end{cases}
\end{align}
Note that in the case where $b=0$ and $\si=1$, this system reduced to $(u, \hat m(\cdot, 1))$ is the same as the one considered by \citeauthor*{bertucci2018optimal} \cite{bertucci2018optimal}, in the case of pure stopping strategies. 

\subsubsection{Randomised strategies} 
Let us provide a more informal discussion on the case where the players are allowed to play with randomised (or mixed) strategies: they no longer necessarily stop when they reach the border $\cS_t$, but decide to stop with some probability. This intuitively means that they can still be diffusing in $\cS_t$; also, the stopped players naturally remain frozen until the end of the game. Therefore, the second equality in \eqref{eqm1m0} becomes $\pa_t m(t,x,0) \ge 0$, as the players may only enter a `stopping position' but never leave it. Since it is still optimal to keep diffusing on $\cC_t$, we obtain
\begin{align}\label{mixed-eqm1m0}
\begin{cases}
\displaystyle \pa_t \hat m(t,x,1) + \mathrm{div}\big(b(t,x,\hat m_t)\hat m(t,x,1)\big) - \frac 1 2 \partial_{xx}\big(\si^2(t,x,\hat m_t) \hat m(t,x,1)\big) = 0,\; \mbox{on $\cC_t$,} \\[0.5em]
\displaystyle \pa_t \hat m(t,x,1) + \mathrm{div}\big(b(t,x,\hat m_t)\hat m(t,x,1)\big) - \frac 1 2 \partial_{xx}\big(\si^2(t,x,\hat m_t) \hat m(t,x,1)\big) \le 0,\; \mbox{on $[0,T] \times \dbR^d$,}
\end{cases}
\end{align}
where we used \eqref{FP} and the fact that $\pa_t m(t,x,0) \ge 0$ to derive the inequality. Formally, the PDE system characterising equilibria becomes
\begin{align}\label{mixed-system-pdes}
\begin{cases}
 \min\{-(\pa_t + \cL)u, u - g\} = 0,\; \mbox{on $[0,T] \times \dbR^d$}, \\
\displaystyle \pa_t \hat m(t,x,1)  + \mathrm{div}\big(b(t,x,\hat m_t)\hat m(t,x,1)\big) - \frac 1 2 \partial_{xx}\big(\si^2(t,x,\hat m_t) \hat m(t,x,1)\big) = 0,\; \mbox{on $\cC_t$,} \\[0.5em]
\displaystyle \pa_t \hat m(t,x,1) + \mathrm{div}\big(b(t,x,\hat m_t)\hat m(t,x,1)\big) - \frac 1 2 \partial_{xx}\big(\si^2(t,x,\hat m_t) \hat m(t,x,1)\big) \le 0,\; \mbox{on $[0,T] \times \dbR^d$,} \\
 u(T,x) = g(x), \; \hat m(0^-,t,x) = m_{0-}(t,x),\; \mbox{on $[0,T] \times \dbR^d$,}
\end{cases}
\end{align}
which has the same form as the system of PDEs considered in \cite{bertucci2018optimal} for mixed strategies. Note that the term $\pa_t m(t,x,0)$ can be interpreted as the potential $V$ discussed by the author, see the comments after \cite[ Definition 1]{bertucci2018optimal}), in the time-dependent case. Note that the authors of \cite{bouveret2020mean} rigorously prove the equivalence of this system with their notion of MFE; therefore, from our discussion in paragraph \ref{sec:comparison}, existence of a weak equilibrium in our sense implies existence of a weak equilibrium in the sense of \cite{bertucci2018optimal}.

\subsection{Connection with the mean-field optimal stopping problem}

We briefly discuss the connection between our mean-field game of stopping times and the so-called mean-field optimal stopping problem, introduced by \citeauthor*{talbi2023dynamic} \cite{talbi2023dynamic}. This connection is already well-known in the case of {\it potential} mean-field games of standard controls, which relate to an auxiliary McKean--Vlasov control problem, see \emph{e.g} \citeauthor*{briani2018stable} \cite{briani2018stable} or \citeauthor*{cecchin2022weak} \cite{cecchin2022weak}. 

\medskip
Consider the simple case where $b$ and $\si$ do not depend on the measure-valued argument, and where 
\begin{align}\label{potentialMFG}
 J(\dbP, m) \coloneqq  \dbE^\dbP\big[g(\t, X, m)\big],\; \mbox{for all $(\dbP, m) \in \cP_p^2(\Omega)$}, 
 \end{align}
where $g : \Omega \times \cP_p(\Omega) \longrightarrow \dbR$ is such that there exists $G : \cP_p(\Omega) \longrightarrow \dbR$ with $g = \d_m G$. In this case, the mean-field game is said to be {\it potential}. We next introduce the mean-field optimal stopping problem associated with $G$, in a weak formulation setting similar to \cite{talbi2023dynamic}. Since $b$ and $\si$ do not depend on $m$, we have $\cR(m) = \cR(m^\prime)$ for all $(m, m^\prime) \in \cP^2_p(\Omega)$, and we may simply denote by $\cR$ their common value. Then, the mean-field optimal stopping problem may simply write
\begin{align}\label{MFoptstop}
\sup_{\dbP \in \cR} G(\dbP). 
\end{align}
We then have the following result.
\begin{proposition}\label{prop:MFG-MFC}
The mean-field optimal stopping problem \eqref{MFoptstop} admits a maximiser. Moreover, any maximiser provides an \emph{MFE} for the potential mean-field game \eqref{potentialMFG}.
\end{proposition}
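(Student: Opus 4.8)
The plan is to establish two things: first, that the static optimisation problem $\sup_{\dbP \in \cR} G(\dbP)$ admits a maximiser; second, that any maximiser $\hat\dbP$ satisfies the fixed-point/optimality conditions of Definition~\ref{def:MFE} for the potential game~\eqref{potentialMFG}. The existence half should follow from the same ingredients powering \Cref{theorem:existence}: since $b$ and $\si$ are measure-independent, $\cR$ is a fixed set that is (weakly) compact --- this is exactly the compactness of $\cR(m)$ invoked in the proof of \Cref{theorem:existence} and in the remark following the randomised-stopping-time discussion. So it suffices to check that $\dbP \longmapsto G(\dbP)$ is upper-semicontinuous for the $p$-Wasserstein topology on $\cP_p(\Omega)$. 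Here I would want to use the regularity transferred from $g = \d_m G$: under the Lipschitz/growth hypotheses on $g$ carried over from \Cref{theorem:convergence}, $G$ is continuous (indeed, writing $G(\dbP) - G(\dbP_0) = \int_0^1 \int_\Omega g(\t, X, \l\dbP + (1-\l)\dbP_0)\,(\dbP - \dbP_0)(\mathrm{d}\t,\mathrm{d}X)\,\mathrm{d}\l$ and using continuity of $g$ in $m$ together with the uniform-integrability/tightness afforded by the bounded semimartingale characteristics in $\cR$). Weierstrass's theorem on the compact set $\cR$ then yields a maximiser.

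\textbf{For the second half}, let $\hat\dbP \in \arg\max_{\dbP\in\cR} G(\dbP)$ and set $m \coloneqq \hat\dbP$. The fixed-point condition $\dbP = m$ is then automatic by construction, so the only thing to verify is $\hat\dbP \in \cR^\star(m) = \cR^\star(\hat\dbP)$, i.e. that $\hat\dbP$ maximises $\dbP \longmapsto J(\dbP, \hat\dbP) = \dbE^\dbP[g(\t, X, \hat\dbP)]$ over $\cR$. This is where the potential structure $g = \d_m G$ does the work: for any $\dbP \in \cR$, consider the curve $\l \longmapsto G(\l\dbP + (1-\l)\hat\dbP)$ on $[0,1]$. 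By the definition of the functional linear derivative,
\[
G(\dbP) - G(\hat\dbP) = \int_0^1 \int_\Omega \d_m G\big(\l\dbP + (1-\l)\hat\dbP, (\t,X)\big)\,(\dbP - \hat\dbP)(\mathrm{d}\t,\mathrm{d}X)\,\mathrm{d}\l,
\]
and in particular the right-derivative at $\l = 0^+$ equals $\int_\Omega \d_m G(\hat\dbP, \cdot)\,\mathrm{d}(\dbP - \hat\dbP) = \dbE^\dbP[g(\t,X,\hat\dbP)] - \dbE^{\hat\dbP}[g(\t,X,\hat\dbP)] = J(\dbP, \hat\dbP) - J(\hat\dbP, \hat\dbP)$. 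Since $\cR$ is convex (the martingale conditions and the conditional-independence condition in \Cref{def:P} are all preserved under convex combinations --- martingale property is linear, and conditional independence here is the vanishing of $\dbE^\dbP[\f(X)\psi(\t)]$ for the relevant test functions, which is linear in $\dbP$), the competitor $\l\dbP + (1-\l)\hat\dbP$ stays in $\cR$, so by maximality of $\hat\dbP$ the function $\l \longmapsto G(\l\dbP + (1-\l)\hat\dbP)$ has a maximum at $\l = 0$, hence its right-derivative there is $\le 0$. This gives exactly $J(\dbP, \hat\dbP) \le J(\hat\dbP, \hat\dbP)$ for all $\dbP \in \cR$, i.e. $\hat\dbP \in \cR^\star(\hat\dbP)$, and $(\hat\dbP, \hat\dbP)$ is an MFE.

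\textbf{The main obstacle} I anticipate is justifying the differentiation-under-the-integral step, i.e. that $\tfrac{\mathrm{d}}{\mathrm{d}\l}\big|_{\l=0^+} G(\l\dbP + (1-\l)\hat\dbP) = \int_\Omega \d_m G(\hat\dbP,\cdot)\,\mathrm{d}(\dbP-\hat\dbP)$ rather than merely the integral-form identity above. This requires continuity of $m' \longmapsto \d_m G(m', \o)$ along the line segment together with a dominated-convergence argument, for which the $p$-polynomial growth of $\d_m G = g$ in $x$ (stated in the hypotheses) plus the uniform $p$-th moment bounds on elements of $\cR$ (from the bounded characteristics and $\dbP_{X_0} = \l \in \cP_p$) are exactly what is needed; one writes the difference quotient $\tfrac1\l(G(\l\dbP+(1-\l)\hat\dbP) - G(\hat\dbP)) = \int_0^1 \int_\Omega \d_m G(\l s\dbP + (1-\l s)\hat\dbP,\cdot)\,\mathrm{d}(\dbP-\hat\dbP)\,\mathrm{d}s$ and passes to the limit $\l\to 0^+$ inside. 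Everything else --- convexity of $\cR$, compactness, continuity of $G$ --- is routine given the results and structure already set up in the paper, so I would keep those parts brief and concentrate the written proof on the potential-structure variational inequality.
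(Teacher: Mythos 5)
Your proposal is correct and follows essentially the same route as the paper: existence via compactness of $\cR$ (\Cref{prop:compact}) together with continuity of $G$, then the first-order variational inequality obtained by perturbing $\hat\dbP$ along the segment $\l\dbP+(1-\l)\hat\dbP$ and identifying the derivative at $\l=0^+$ with $J(\dbP,\hat\dbP)-J(\hat\dbP,\hat\dbP)$ via $g=\d_m G$. Your write-up is in fact somewhat more explicit than the paper's (which compresses the expansion into a single $\circ(\l)$ statement and leaves the convexity of $\cR$ and the differentiation-under-the-integral step implicit), but the underlying argument is identical.
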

\begin{proof}
Since $G$ is differentiable, it is continuous. Moreover, by \Cref{prop:compact}, $\cR$ is compact, and therefore \eqref{MFoptstop} has a non-empty set of maximisers. Let $\hat \dbP \in \cR$ denote one of them. Fix also $\l \in (0,1)$ and $\dbP \in \cR$. We have, by optimality of $\hat \dbP$
\[
 \l \big(G(\hat \dbP) - G(\dbP)\big) \ge 0, 
 \]
which, by regularity of $G$ and definition of $\d_m G$, implies
\[
 \l \int_\Omega g(\t, X, \hat \dbP)d(\hat \dbP - \dbP)(\t, X) + \circ(\l) \ge 0,  
 \]
which, after dividing by $\l$ and letting $\l \longrightarrow 0$, leads to $J(\hat \dbP, \hat \dbP) = \dbE^{\hat \dbP}\big[g(\t, X, \hat \dbP)\big] \ge \dbE^{\dbP}\big[g(\t, X, \hat \dbP)\big] = J(\dbP, \hat \dbP).$ By the arbitrariness of $\dbP \in \cR = \cR(\hat \dbP)$, this proves that $\hat \dbP$ is an MFE for \eqref{potentialMFG}.
\end{proof}

\section{Examples}\label{sec:examples}

In this section, we mention some forms of our criterion $J$ for which the assumptions of Theorem \ref{theorem:existence} are satisfied. 

\subsection{Concave function of the expectation}
Let $\f : \dbR \longrightarrow \dbR$ be concave and $f : [0,T] \times \dbR^d \times \cP_p(\Omega) \longrightarrow \dbR$ be continuous, and consider the case where
\[
J(\dbP, m) \coloneqq  \f\big(\dbE^{\dbP}\big[f(\t, X_\t, m)\big]\big),\; \mbox{for all $\dbP, m \in \cP_p(\Omega)$}. 
\] 
It is straightforward to see that $J$ is continuous. Moreover, since $\f$ is concave and $\dbP \longmapsto \dbE^\dbP[\cdot]$ is linear, $J$ is concave in $\dbP$. Therefore Theorem \ref{theorem:existence} applies.

\subsection{Concave probability distortion}
Let $\f$ and $g$ be defined as above, with the additional assumption that $g$ is non-negative. Consider the case where 
\[
J(\dbP, m) \coloneqq  \cE^\dbP\big(f\big(X_\t, m)\big),\;\mbox{for all $\dbP, m \in \cP_p(\Omega)$}, 
\]
where $\cE^\dbP$ is a nonlinear expectation defined by
\[
 \cE^\dbP(\xi ) \coloneqq  \int_0^\infty \f\big(\dbP[\xi \ge x]\big)\mathrm{d}x,
 \]
for all non-negative random variable $\xi$. This reward has been studied in the context of optimal stopping by \citeauthor*{xu2013optimal} \cite{xu2013optimal}, for various shapes of the function $\f$ (convex, concave, S-shaped and reversed S-shaped). If $\f$ is concave, we easily see that, for all $(\dbP, \dbP^\prime) \in \cP^2_p(\Omega)$ and $\l \in [0,1]$
\begin{align*}
 \cE^{\l \dbP + (1-\l)\dbP^\smallfont{\prime}}\big(\xi \big) = \int_0^\infty \f\big(\l \dbP[\xi \ge x] + (1-\l)\dbP^\smallfont{\prime}[\xi \ge x] \big)\mathrm{d}x 
 &\ge \int_0^\infty \big(\l\f\big(\dbP[\xi \ge x]\big) + (1-\l)\f\big(\dbP^\smallfont{\prime}[\xi \ge x] \big)\big)\mathrm{d}x \\
 &= \l \cE^\dbP(\xi) + (1-\l) \cE^{\dbP^\smallfont{\prime}}(\xi),
 \end{align*}
 which implies that $J$ is concave in $\dbP$. Since it is also clearly continuous, \Cref{theorem:existence} applies again.
 
\subsection{Non-linear expectations}
Let $g : [0,T] \times \dbR \times \dbR^d \longrightarrow \dbR$. We enlarge the canonical space to $\Omega \coloneqq  [0,T] \times \cC^d \times \cC^{d'}$, with canonical process $(\t, X, W)$, and that we require $W$ to be a standard Brownian motion under each element of $\cR(m)$, for all $m \in \cP([0,T] \times \cC^d)$. Consider now the case where $J$ is given by
 \[
 J(\dbP, m) \coloneqq  \cE_g^\dbP\big[f(\t, X_\t, m)\big], 
 \]
 where $\cE_g^\dbP[f(\t, X_\t, m)] \coloneqq  Y_0^{\dbP}$, with $(Y^\dbP, Z^\dbP)$ the solution of the backward SDE
 \[
  Y_t^\dbP = f(\t, X_\t, m) + \int_t^T g(s, Y_s^\dbP, Z_s^\dbP)\mathrm{d}s - \int_t^T Z_s^\dbP\cdot \mathrm{d}W_s, \\; \mbox{$\dbP$--a.s.} 
  \]
 
 \begin{proposition}\label{prop:G-expectation}
 Assume that $f$ is continuous, $g$ is Lipschitz-continuous in $(y,z)$ and continuous in $t$, $g$ is concave in $(y,z)$, and its concave conjugate $g^\star$ is also continuous. Then there exists an {\rm MFE} for the above problem.
 \end{proposition}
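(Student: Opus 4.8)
The goal is to verify the two hypotheses of Theorem \ref{theorem:existence}: continuity of $J$ in $m$ and concavity of $J$ in $\dbP$. Let me think about each. For concavity in $\dbP$: the map $\dbP \mapsto Y_0^\dbP = \cE_g^\dbP[\xi]$ where $\xi = f(\t, X_\t, m)$ is fixed. The $g$-expectation is known to admit a dual (robust) representation when $g$ is concave: writing $g^\star$ for the concave conjugate, one has
\[
Y_0^\dbP = \inf_{(\beta,\gamma)} \dbE^{\dbQ^{\beta,\gamma}}\Big[\xi + \int_0^T g^\star(s,\beta_s,\gamma_s)\,\mathrm{d}s\Big],
\]
where the infimum runs over bounded predictable processes and $\dbQ^{\beta,\gamma}$ is the measure obtained by Girsanov from the drift $\gamma$ together with a discounting by $\beta$. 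So first I would establish this variational formula (standard BSDE duality, e.g. via the comparison theorem applied to the linearised BSDE $g(s,y,z)\le \beta_s y + \gamma_s\cdot z + g^\star(s,\beta_s,\gamma_s)$). Crucially, for a fixed pair $(\beta,\gamma)$ the measure $\dbQ^{\beta,\gamma}$ depends only on $W$ under $\dbP$, and $W$ is a Brownian motion under every element of $\cR(m)$; hence the inner expectation is an \emph{affine} functional of $\dbP$ (it is $\dbE^\dbP$ of a fixed random variable, up to the Girsanov density, which does not depend on $\dbP$ once we condition appropriately). An infimum of affine functionals of $\dbP$ is concave in $\dbP$, which gives hypothesis $(ii)$.

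For continuity in $m$: here $\dbP$ is held fixed while $m$ varies, and $m$ enters only through the terminal datum $\xi_m = f(\t, X_\t, m)$ of the BSDE. Since $f$ is continuous and $\cP_p(\Omega)$-valued arguments vary in the Wasserstein topology, $m_n \to m$ implies $\xi_{m_n} \to \xi_m$ in $L^p(\dbP)$ (using the $p$-polynomial growth / integrability built into $\cP_p$ and, if needed, uniform integrability); then the standard stability estimate for BSDEs with Lipschitz driver gives $Y_0^{\dbP,m_n} \to Y_0^{\dbP,m}$, and this convergence is uniform enough in $\dbP$ (the BSDE estimate depends only on the Lipschitz constant of $g$ and on $\|\xi_{m_n}-\xi_m\|_{L^2(\dbP)}$) to conclude joint continuity, or at least the continuity in $m$ that Theorem \ref{theorem:existence} requires. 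Combining the two verifications, Theorem \ref{theorem:existence} yields the existence of an MFE.

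The main obstacle I anticipate is the concavity step, specifically making the dependence on $\dbP$ genuinely affine in the dual representation. One must be careful that the class of ``test'' controls $(\beta,\gamma)$ and the associated change-of-measure densities are defined purely in terms of the Brownian filtration (or at least independently of which $\dbP \in \cR(m)$ is chosen), so that $\dbP \mapsto \dbE^{\dbQ^{\beta,\gamma}}[\cdots]$ is linear in $\dbP$; the continuity assumptions on $g$ and $g^\star$ are exactly what is needed to ensure the dual formula holds with an attainable or at least well-posed infimum and that the candidate controls can be taken in a fixed bounded set. A secondary technical point is integrability: one should check that $f(\t,X_\t,m) \in L^2(\dbP)$ (or $L^p$) uniformly enough for the BSDE theory to apply and for the $p$-Wasserstein continuity of $m \mapsto \xi_m$ to transfer to $L^2(\dbP)$-continuity; the $p$-polynomial growth hypothesis on $g$ in the statement of the theorems, together with the moment bounds on $\cR(m)$, should handle this.
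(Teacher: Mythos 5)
Your proposal is correct and follows essentially the same route as the paper: concavity in $\dbP$ is obtained by writing the concave $g$-expectation, via the concave conjugate $g^\star$ and the El Karoui--Peng--Quenez duality, as an infimum over a $\dbP$-independent class of controls of linear $g$-expectations, each of which is affine in $\dbP$ through the explicit Girsanov/discounting representation; continuity in $m$ then follows from standard BSDE stability estimates. The technical point you flag---that the control class and the densities must be defined on the enlarged canonical space purely in terms of $W$, uniformly over $\cR(m)$---is exactly how the paper closes the argument, using the continuity of $g^\star$ to fix the domain $K$.
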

 
 \begin{proof}
 \textbf{Step $1$.} We first prove that $\cE_g^\dbP[\cdot]$ is linear in $\dbP$ when $g$ is linear, \emph{i.e.}, when there exist predictable processes $\a,$ $\b$, and $g^0$ such that $ g(t, y, z) = g^0(t) + \a_t  y + \b_t \cdot z,\; \mbox{for all $(t,y,z) \in [0,T] \times \dbR \times \dbR^d$.}$ In this case, we may use the following representation formula for all $\xi \in \dbL^2(\Omega, \cF_T)$
 \[
  \cE_g^\dbP[\xi ] = \dbE^\dbP\bigg[\Gamma_T \xi + \int_0^T \Gamma_t g^0(t)\mathrm{d}t \bigg], 
  \]
 with 
 \[
 \Gamma_t \coloneqq  \exp\bigg(\int_0^t \b_s \cdot \mathrm{d}W_s - \int_0^t \bigg(\a_s - \frac{\| \b_s \|^2}{2}\bigg)\mathrm{d}s\bigg),
 \]
 which clearly proves that $\cE_g^\dbP[\cdot]$ is linear in $\dbP$. 
 
 \medskip
 \no\textbf{Step $2$.} Let us now move to the general case. Since $g$ is concave in $(y,z)$, we may introduce its concave conjugate $g^\star$, which satisfies
 \[
 g(t,y,z) = \inf_{(\a, \b) \in D_{\smallfont{g^\tinyfont{\star}}\smallfont{(}\smallfont{t}\smallfont{,} \smallfont{\cdot}\smallfont{)}}} \big\{ g^\star(t,\a,\b) + \a y + \b \cdot z \big\},
  \]
 where $D_{g^\smallfont{\star}(t, \cdot)}$ denotes the domain of $g^\star(t, \cdot)$ for all $t \in [0,T]$. Also denote by $\cE_{g^\smallfont{\star}, \a, \b}^\dbP[\cdot]$ the non-linear expectation associated with the linear generator $(t, y, z) \longmapsto g^\star(t,\a,\b) + \a y + \b \cdot z$. Then, by \citeauthor*{el1997backward} \cite[Proposition 3.4]{el1997backward}, we have 
 \[
 \cE_{g}^\dbP[\xi] = \inf_{(\a, \b) \in \cA^\smallfont{\dbP}} \cE_{g^\smallfont{\star}, \a, \b}^\dbP[\xi], 
 \]
 where $\cA^\dbP$ is the set of $K$-valued, $\F$-predictable processes $(\a, \b)$ such that $\dbE^\dbP\big[\int_0^T g(t, \a_t, \b_t)^2 \mathrm{d}t\big] < \infty,$, where $K$ is a bounded set. Since we assumed that $g^\star$ is continuous, we easily see that $\cA$ is independent of the chosen $\dbP$. Therefore, using the result of \textbf{step $1$}, we conclude that $\cE_{g}^\dbP[\xi]$, and therefore $J$, is concave in $\dbP$. The continuity in $m$ is easily deduced from standard BSDEs regularity results. 
 \end{proof}

\section{Proofs of the main results}\label{sec:proofs}

\subsection{Proof of Theorem \ref{theorem:existence}}

\textbf{Step $1$.} Assume $b$ and $\si$ are bounded. By \Cref{lem:hemicontinuity} and the fact that $J$ is continuous, we deduce from Berge's theorem, see \citeauthor*{aliprantis2006infinite} \cite[Theorem 17.31]{aliprantis2006infinite}, that $\cP_p(\Omega) \ni m \longmapsto \cR^\star(m)$ is upper-hemicontinuous. It has also nonempty (by concavity hence continuity of $J$ in $\dbP$), compact (closed subspace of a compact space) and convex (by concavity of $J$ in $\dbP$ again) values. 

\medskip
Let $L \coloneqq  \lVert b \rVert_{\infty} \vee \lVert \si \rVert_{\infty}$, and $\hat \cP_L \subset \cP_p(\Omega)$ be the set of probability measures $\dbP$ on $[0,T] \times \cC^d$ such that the second marginals are continuous semimartingale measures with characteristics uniformly bounded by $L$, and satisfy $\dbP_{X_\smallfont{0}} = \l$. $\hat \cP_L$ is a nonempty, $\cW_p$-compact and convex subset of the set of bounded signed measures on $[0,T] \times \cC^d$, which is a locally convex Hausdorff space when endowed with the topology of weak convergence. Note also that since $\hat \cP_L$ is $\cW_p$-compact, it is also weakly compact. The set-valued map $\cR^\star : \hat \cP_L \longrightarrow 2^{\hat \cP_\smallfont{L}}$ then satisfies all the assumptions of Kakutani--Fan--Glicksberg's theorem (see \citeauthor*{fan1952fixed} \cite[Theorem 1]{fan1952fixed}), which provides the existence of an MFE.

\medskip
\no\textbf{Step $2$.} We no longer assume that $b$ and $\si$ are bounded. For $n \in\N^\star$, let $(b_n, \si_n) \coloneqq  (- n \vee b \wedge n, -n \vee \si \wedge n)$, {where this expression has to be understood coordinate-wise}. For $m \in \cP_p(\Omega)$, we denote by $\cR_n(m)$ the counterpart of $\cR(m)$ from \Cref{def:P} with $b_n$ and $\si_n$ instead of $b$ and $\si$. Note that $b_n$ and $\si_n$ still satisfy the Lipschitz-continuity assumptions of $b$ and $\si$, and that they are bounded. Therefore, there exists a sequence of MFE $(\dbP^n )_{n \in\N^\smallfont{\star}}$ for the mean-field games
\[
 V^n(m) \coloneqq  \sup_{\dbP \in \cR_\smallfont{n}(m)} J(\dbP, m), \; n \in \N^\star. 
 \]
By compactness of $[0,T]$, $( \dbP^n_{\t} )_{n \in\N^\smallfont{\star}}$ is tight. Similarly, by Lipschitz-continuity of $b$ and $\si$, we easily see by using Aldous's criterion (see for instance \citeauthor*{billingsley1999convergence} \cite[Theorem 16.10]{billingsley1999convergence}) that $(\dbP^n_{X})_{n \in\N^\smallfont{\star}}$ is tight, and thus $(\dbP^n)_{n \in\N^\smallfont{\star}}$ is tight. After possibly passing to a subsequence, we may then assume that $(\dbP^n)_{n\in\N^\smallfont{\star}}$ converges weakly to some $\dbP \in \cP_p(\Omega)$ as $n \to \infty$. Note that, since the sequence $(\dbP^n)_{n \in\N^\smallfont{\star}}$ is $p$-uniformly integrable, \emph{i.e.} 
\[
\lim_{R \to \infty} \sup_{n \in\N^\smallfont{\star}} \dbE^{\dbP^\smallfont{n}}\big[ \lVert X \rVert_\infty^p \1_{\lVert X \rVert_\infty \ge R} \big] = 0,
\]
this also implies the convergence in $\cP_p(\Omega)$. We easily see that the martingale problem from \Cref{def:P} passes to the limit. We conclude by \Cref{rem:independent} that $\dbP \in \cR(\dbP)$. Note that the fixed-point condition for $\dbP$ is immediately provided.

\medskip
\no It remains to prove that $\dbP \in \cR^\star(\dbP)$. {First observe that, by extending the canonical space to $[0,T] \times \cC^d \times \cC^{d^\smallfont{\prime}}$ and the canonical process to $(\t, X, W)$, we may require $X$ to be a diffusion process driven by $W$, with $W$ being a Brownian motion under each element of $\cR(m)$}. Then, we may take $\dbP' \in \cR(\dbP)$ and $(\tilde \Omega, \tilde \cF_T, \tilde \dbF \coloneqq  (\tilde \cF_t)_{0 \le t \le T}, \tilde \dbP)$ a filtered probability space endowed with $\tilde \t$, $\tilde X$ such that $\tilde \dbP_{(\tilde \t, \tilde X)} = \dbP'$ and 
\[
 \tilde X_t = \tilde X_0 + \int_0^t b(s, \tilde X, \dbP) \mathrm{d}s + \int_0^t \si(s, \tilde X, \dbP) \mathrm{d}\tilde W_s, \; \mbox{$\tilde \dbP$--a.s.,} 
 \]
where $\tilde W$ is an $(\tilde \dbF,\tilde \dbP)$--Brownian motion. For $n \in\N^\star$, we introduce $\tilde X^n$, defined as the unique strong solution to the SDE
\[
 \tilde X_t^n = \tilde X_0 + \int_0^t b_n(s, \tilde X^n, \dbP^n) \mathrm{d}s + \int_0^t \si_n(s, \tilde X^n, \dbP^n) \mathrm{d}\tilde W_s. 
 \]

We then define $\tilde \dbP^n \coloneqq  \tilde \dbP_{(\tilde \t, \tilde X^n)}$, which is clearly in $\cR_n(\dbP^n)$. Note that
\[
\cW_p^p(\tilde \dbP^n, \dbP') = \cW_p^p\big( \tilde \dbP_{(\tilde \t, \tilde X^\smallfont{n})}, \tilde \dbP_{(\tilde \t, \tilde X)}\big) \le C_p\dbE^{\tilde \dbP}\big[ \lVert \tilde X^n - \tilde X \rVert_\infty^p \big],
\] 
for some constant $C_p \ge 0$, and that the right-hand side term tends to $0$ as $n \to \infty$, by standard SDEs estimates, due to the fact that $\cW_p(\dbP^n, \dbP) \longrightarrow 0$ as $n \to \infty$. Then $\cW_p(\tilde \dbP^n, \dbP') \longrightarrow 0$ as $n \to \infty$. Thus, by continuity of $J$
\begin{align*}
J(\dbP, \dbP) = \underset{n \to \infty}{\lim} J(\dbP^n, \dbP^n) 
\ge \underset{n \to \infty}{\lim} J(\tilde \dbP^n, \dbP^n) 
= J(\dbP',  \dbP),
\end{align*}
which, by the arbitrariness of $\dbP' \in \cR(\dbP)$, proves that $\dbP \in \cR^\star(\dbP)$ and thus ends the proof. 

\subsection{Proof of Theorem \ref{theorem:convergence}}

Without loss of generality, we may assume that $(\Omega^0, \cF_T^0, \dbF^0, \dbP^0)$ from \Cref{sec:N-player} is sufficiently large so that there exists  $\tilde \t$, $\tilde X$ such that $\dbP^0 \circ (\tilde \t, \tilde X)^{-1} = \hat m$, and a sequence $\{(\t_k, X^k)\}_{k \in\N^\smallfont{\star}}$ of $\P^0$-independent copies of $(\tilde \t, \tilde X)$, where the $\{X^k\}_{k \in\N^\smallfont{\star}}$ are driven by $\P^0$-independent $(\F,\P^0)$--Brownian motions $(W^k)_{k \in\N^\smallfont{\star}}$. For all $N \in\N^\star$, we denote $\btau^N \coloneqq  (\t_1, \dots, \t_N)$. 

\medskip
\no{\bf Step $1$.} Denote $\hat m^N \coloneqq  m^N( \btau^{N}, \bX^{N, \btau^{N}})$. Relying on the synchronous coupling method (initiated by \citeauthor*{mckean1967propagation} in \cite{mckean1967propagation}), we first prove that we may extract a subsequence (still denoted the same for notational simplicity) such that

\medskip
\no{$(i)$} $\cW_p\big( \hat m^N, \hat m \big) \underset{N \to \infty}{\longrightarrow} 0$, $\dbP^0$--a.s., 

\smallskip
\no{$(ii)$} $\dbE^{\dbP^\smallfont{0}}\Big[\lVert X_t^{k, N, \btau^N} - X_t^k \rVert \Big] \underset{N \to \infty}{\longrightarrow} 0$, for all $k \in [N]$ and $t\in[0,T]$.

\medskip
\no Since $b$ and $\si$ are Lipschitz-continuous (with respect to $\cW_p$ for the measure-valued variable) and the $X^k$ and $X^{k,N,\btau}$ have the same initial conditions, we easily see by using Hölder's, Burkholder--Davis--Gundy's and Gr\"onwall's inequalities that we have, for some constant $C \ge 0$
\begin{align}\label{ineq1}
\dbE^{\dbP^\smallfont{0}}\big[ \lVert X_{\cdot \wedge t}^{N, \btau^\smallfont{N}} - X_{\cdot \wedge t}^k \rVert_{\infty} \big] \le \int_0^t \dbE^{\dbP^\smallfont{0}}\big[\cW_p^p(\hat m_{\cdot \wedge s}^N, \hat m_{\cdot \wedge s})\big]\mathrm{d}s, \; \text{for all $N \in\N^\star$, $k \in [N]$ and $t \in [0,T]$},
\end{align}
where we denoted
\begin{align*}
\hat m_{\cdot \wedge s}^N \coloneqq  \frac 1 N \sum_{k=1}^N \d_{(\t_\smallfont{k}, X_{\smallfont{\cdot} \smallfont{\wedge} \smallfont{t}}^{\smallfont{k}\smallfont{,}\smallfont{N}\smallfont{,}\smallfont{\btau}})}, \; \hat m_{\cdot \wedge t} \coloneqq  \dbP^0 \circ (\tilde \t, \tilde X_{\cdot \wedge t})^{-1}.
\end{align*}
Introduce $\bar m_{\cdot \wedge t}^N \coloneqq  \frac 1 N \sum_{k=1}^N \d_{(\t_\smallfont{k}, X_{\smallfont{\cdot} \smallfont{\wedge} \smallfont{t}}^\smallfont{k})}$, $\bar m^N \coloneqq  \bar m^N_{\cdot \wedge T}$. We have
\begin{align*}
\cW_p^p(\hat m_{\cdot \wedge t}^N, \hat m_{\cdot \wedge t}) &\le p\big(\cW_p^p(\hat m_{\cdot \wedge s}^N, \bar m_{\cdot \wedge s}^N) + \cW_p^p(\bar m_{\cdot \wedge s}^N, \hat m_{\cdot \wedge s})\big)  \le p\bigg(\frac 1 N \sum_{k=1}^N \lVert X_{\cdot \wedge t}^{N, \btau^N} - X_{\cdot \wedge t}^k \rVert_{\infty} + \cW_p^p(\bar m_{\cdot \wedge s}^N, \hat m_{\cdot \wedge s})\bigg),
\end{align*}
where the second inequality comes from the fact that $\frac 1 N \sum_{k=1}^N \d_{ \{(\t_\smallfont{k}, X^{\smallfont{k}\smallfont{,}\smallfont{N}\smallfont{,} \smallfont{\btau}}), (\t_\smallfont{k}, X^\smallfont{k}) \}}$ is a coupling of $\hat m^N$ and $\hat m$. Therefore, by \Cref{ineq1} and Gr\"onwall's inequality, we have, for $t=T$
\begin{align*}
\dbE^{\dbP^\smallfont{0}}\big[\cW_p^p(\hat m^N, \hat m)\big] \le C'\dbE^{\dbP^\smallfont{0}}\big[\cW_p^p(\bar m^N, \hat m)\big],
\end{align*}
which implies that $\dbE^{\dbP^\smallfont{0}}[\cW_p^p(\hat m^N, \hat m)]\longrightarrow 0$, since $\cW_p^p(\bar m^N, \hat m) \longrightarrow 0$, $\dbP^0$--a.s., as $N$ goes to $\infty$, by the law of large numbers. This implies that $\cW_p^p(\hat m^N, \hat m)$ converges to $\hat m$ in $\P^0$-probability, and thus we may extract a subsequence such that this convergence holds $\dbP^0$--a.s., which proves $(i)$. Then $(ii)$ is a simple consequence of \Cref{ineq1}.

\medskip
\no{\bf Step $2$.} Denote $\cT_N$ the set of $[0,T]$-valued $\dbF^{W^\smallfont{1}, \dots, W^\smallfont{N}}$--stopping times, and $\cT_\infty \coloneqq  \cup_{N \in \N^\star} \cT_N$. Let us prove that 
\begin{align*}
\e_N &\coloneqq   \bigg( \sup_{\th \in \cT_{\smallfont{N}}} \dbE^{\dbP^\smallfont{0}}\Big[g\big(\th, X_\th^{1, N, \th \otimes_\smallfont{1} \btau^{\smallfont{N}\smallfont{,}\smallfont{-}\smallfont{1}}}, m^N( \th \otimes_1 \btau^{N,-1}, \bX^{N,  \th \otimes_\smallfont{1} \btau^{\smallfont{N}\smallfont{,}\smallfont{-}\smallfont{1}}})\big)\Big]- \dbE^{\dbP^\smallfont{0}}\Big[g\big(\t_1, X_{\t_\smallfont{1}}^{1, N, \btau}, m^N(\btau^{N}, \bX^{N, \btau})\big)\Big] \bigg)^+,
\end{align*}
goes to $0$ as $N \longrightarrow \infty$. By symmetry of the $(X^{1,N,\btau}, \dots, X^{N,N,\btau})$, we have
\[
V_0^{k,N} \le \dbE^{\dbP^\smallfont{0}}\Big[g\big(\t_1, X_{\t_\smallfont{1}}^{1, N, \btau}, m^N(\btau^{N}, \bX^{N, \btau})\big)\Big] + \e_N, \; \mbox{for all $N \in\N^\star$, $k \in [N]$.}
\]
 We first observe that $\e_N \le \b_N + \gamma_N$, where
\begin{align*}
\b_N &\coloneqq  \dbE^{\dbP^\smallfont{0}}\bigg[ \sup_{0 \le t \le T} \Big| g\big(t, X_t^{1, N, t \otimes_\smallfont{1} \btau^{\smallfont{N}\smallfont{,}\smallfont{-}\smallfont{1}}}, m^N( t \otimes_1 \btau^{N,-1}, \bX^{N,  t \otimes_\smallfont{1} \btau^{\smallfont{N}\smallfont{,}\smallfont{-}\smallfont{1}}})\big) - g(t, X_t^1, \hat \dbP ) \Big|\bigg], \\
\gamma_N &\coloneqq  \bigg( \sup_{\th \in \cT_\smallfont{\infty}} \dbE^{\dbP^\smallfont{0}}\big[g(\th, X_\th^1, \hat \dbP)\big] - \dbE^{\dbP^\smallfont{0}}\Big[g\big(\t_1, X_{\t_\smallfont{1}}^{1, N, \btau}, m^N(\btau^{N}, \bX^{N, \btau})\big)\Big] \bigg)^+.
\end{align*}
By the Lipschitz-continuity of $g$, we have for some constant $C \ge 0$
\begin{align*}
\b_N &\le C\dbE^{\dbP^\smallfont{0}}\bigg[ \sup_{0 \le t \le T} \Big\{ \big\| X_t^{1, t \otimes_\smallfont{1} \btau^{\smallfont{N}\smallfont{,}\smallfont{-}\smallfont{1}}} - X_t^1 \big\| + \cW_p\big( m^N\big( t \otimes_1 \btau^{N,-1}, \bX^{N,  t \otimes_\smallfont{1} \btau^{\smallfont{N}\smallfont{,}\smallfont{-}\smallfont{1}}}\big), \hat \dbP \big) \Big\} \bigg],
\end{align*}
which tends to 0 by \textbf{step $1$}. For the same reasons, we have
\begin{align*}
 \dbE^{\dbP^\smallfont{0}}\Big[g\big(\t_1, X_{\t_\smallfont{1}}^{1, N, \btau}, m^N(\btau^{N}, \bX^{N, \btau})\big)\Big] &\underset{N \to \infty}{\longrightarrow}  \dbE^{\dbP^\smallfont{0}}\big[g(\t_1, X_{\t_1}^{1},\hat \dbP )\big]  = \dbE^{\hat \dbP}\big[g(\t_1, X_{\t_1}^{1},\hat \dbP )\big],
 \end{align*}
and therefore $\gamma_N \longrightarrow 0$ by the optimality of $\hat \dbP$. Finally $\e_N \longrightarrow 0$. 

\medskip
\no{\bf Step $3$.} Our definition \ref{def:N-Nash} (ii) of $\e$--Nash equilibria requires stopping times for the Brownian filtration. Since the $\{\btau^N\}_{N \in\N^\smallfont{\star}}$ may not stopping times for $\dbF^{W^\smallfont{1}, \dots, W^\smallfont{N}}$, we need to approximate them by $\dbF^{W^\smallfont{1}, \dots, W^\smallfont{N}}$--stopping times. By \Cref{prop:pure-stopping}, for all $N \in\N^\star$, we may construct $(\btau^{N,n} \coloneqq  (\t_1^{n}, \dots, \t_N^{n}))_{n \in \N^\smallfont{\star}}$ in $(\cT_N)^N$ such that $\dbP^0 \circ (\btau^{N,n}, \bX^{N, \btau^{\smallfont{N}\smallfont{,}\smallfont{n}}})^{-1}$ converges to $\dbP^0 \circ (\btau^N, \bX^{N, \btau^\smallfont{N}})^{-1}$ for $\cW_p$ as $n \longrightarrow \infty$. Introduce
\begin{align*}
\e_N^n &\coloneqq   \sup_{(k,\th)\in[N]\times \cT_\smallfont{N}} \dbE^{\dbP^\smallfont{0}}\Big[g\big(\th, X_\th^{k, N, \th \otimes_\smallfont{k} \btau^{\smallfont{N}\smallfont{,}\smallfont{n}\smallfont{,}\smallfont{-}\smallfont{k}}}, m^N( \th \otimes_k \btau^{N,n,-k}, \bX^{N,  \th \otimes_\smallfont{k} \btau^{\smallfont{N}\smallfont{,}\smallfont{n}\smallfont{,}\smallfont{-}\smallfont{k}}})\big)-g\big(\t_k^{n}, X_{\t_\smallfont{k}^{\smallfont{n}}}^{k, N, \btau^{\smallfont{N}\smallfont{,}\smallfont{n}}}, m^N(\btau^{N,n}, \bX^{N, \btau^{\smallfont{N}\smallfont{,}\smallfont{n}}})\big)\Big] ^+.
\end{align*}
First, by continuity of $g$, we have
\begin{align*}
\max_{k \in [N]} \Big| \dbE^{\dbP^\smallfont{0}}\Big[g\big(\t_k^{n}, X_{\t_\smallfont{k}^{\smallfont{n}}}^{k, N, \btau^{\smallfont{N}\smallfont{,}\smallfont{n}}}, m^N(\btau^{N,n}, \bX^{N, \btau^{\smallfont{N}\smallfont{,}\smallfont{n}}})\big)\Big] - \dbE^{\dbP^\smallfont{0}}\Big[g\big(\t_k, X_{\t_\smallfont{k}}^{k, N, \btau^{N}}, m^N(\btau^{N}, \bX^{N, \btau^{\smallfont{N}v}})\big)\Big]  \Big| \underset{n \to \infty}{\longrightarrow} 0.
\end{align*}
Note also that 
\begin{align*}
\dbE^{\dbP^\smallfont{0}}\bigg[\sup_{0 \le t \le T}\Big| g\big(t, X_t^{k, N, t \otimes_\smallfont{k} \btau^{\smallfont{N}\smallfont{,}\smallfont{n}\smallfont{,}\smallfont{-}\smallfont{k}}}, m^N( t \otimes_k \btau^{N,n,-k}, \bX^{N,  t \otimes_\smallfont{k} \btau^{\smallfont{N}\smallfont{,}\smallfont{n}\smallfont{,}\smallfont{-}\smallfont{k}}})\big)- g\big(t, X_t^{k, N, t \otimes_\smallfont{k} \btau^{\smallfont{N}\smallfont{,}\smallfont{-}\smallfont{k}}}, m^N( t \otimes_k \btau^{N,-k}, \bX^{N,  t \otimes_\smallfont{k} \btau^{\smallfont{N}\smallfont{,}\smallfont{-}\smallfont{k}}})\big) \Big| \bigg],
\end{align*}
converges to $0$ as $n$ goes to $\infty$, uniformly in $k\in[N]$, since $g$ is Lipschitz-continuous in $(x,m)$, uniformly in $t \in [0,T]$, and using again \textbf{Step $1$}. Thus $\e_N^n \underset{n \to \infty}{\longrightarrow} \e^N$. Furthermore, since $(\btau^{N,n}, \bX^{N, \btau^{\smallfont{N}\smallfont{,}\smallfont{n}}})$ weakly to $(\btau^N, \bX^{N, \btau^\smallfont{N}})$, $m^N\big( \btau^{N,n}, \bX^{N, \btau^{\smallfont{N}\smallfont{,}\smallfont{n}}} \big)$ also converges weakly to $m^N\big( \btau^{N}, \bX^{N, \btau^{\smallfont{N}\smallfont{,}}} \big)$ as $n \to \infty$. Therefore, by \textbf{Step 1}, we may find a subsequence of $\e_{N}^{n_N}$-Nash equilibria such that $m^N\big( \btau^{N,n_N}, \bX^{N, \btau^{\smallfont{N}\smallfont{,}\smallfont{n_N}}} \big)$ converges weakly to $\hat m$, which is a constant in the space of measure-valued random variable. This implies that the convergence also holds in probability, and eventually $\dbP^0$-a.s. after passing to an appropriate subsequence. \qed

\begin{appendix}

\section{Technical lemmata}

\subsection{Compactness}

\begin{proposition}\label{prop:compact}
For all $m \in \cP_p(\Omega)$, $\cR(m)$ is $\cW_p$-compact. 
\end{proposition}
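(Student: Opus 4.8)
The plan is to invoke the standard characterisation: a subset of $\cP_p(\Omega)$ is $\cW_p$-compact if and only if it is tight, uniformly $p$-integrable, and closed for the topology of weak convergence (recall that $\Omega = [0,T]\times\cC^d$ is Polish). I would thus establish these three properties for $\cR(m)$; the first two are essentially routine consequences of the Lipschitz assumptions on $b$ and $\si$, while the third---stability of the defining constraints under weak limits---is where the actual work lies, the decisive stability fact for the conditional-independence constraint having already been isolated in \Cref{rem:independent}.

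\emph{Tightness and moment estimates.} Fix $m$. For $\dbP\in\cR(m)$, the canonical process $X$ is, under $\dbP$, a continuous semimartingale with drift $b(\cdot,X,m)$ and quadratic variation $\int_0^\cdot(\si\si^\top)(s,X,m)\mathrm{d}s$, and the Lipschitz-continuity of $b,\si$ forces at most linear growth of both coefficients in $\|X\|_\infty$ (for the fixed $m$). Applying Burkholder--Davis--Gundy's inequality to the martingale $M$, Hölder's inequality to the drift term, and Grönwall's lemma after a localisation/Fatou argument yields $\dbE^\dbP[\|X\|_\infty^p]\le C\big(1+\int_{\dbR^d}|x|^p\l(\mathrm{d}x)\big)$, with $C$ depending only on $T,p,d$ and the Lipschitz constants, hence uniform over $\dbP\in\cR(m)$. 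Running the same computation against a de la Vallée--Poussin function associated with $\l$ gives the $p$-uniform integrability of $\{\|X\|_\infty:\dbP\in\cR(m)\}$; since the $\t$-marginals are carried by the compact interval $[0,T]$, this yields uniform $p$-integrability of $\cR(m)$ in $\cP_p(\Omega)$. Tightness of the $X$-marginals then follows from Aldous's criterion, using the increment bounds supplied by BDG together with the uniform moment estimate, and tightness of the $\t$-marginals is automatic; so $\cR(m)$ is relatively weakly compact.

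\emph{Closedness.} Let $(\dbP^n)_{n\in\N}\subset\cR(m)$ converge weakly to some $\dbP$; by the previous step $\dbP\in\cP_p(\Omega)$ and $\dbP^n\to\dbP$ in $\cW_p$ as well. That $\dbP_{X_0}=\l$ is immediate, $X_0$ being a continuous functional of the path. The martingale properties of $M$ and $N$ pass to the limit by the usual stability of martingale problems: the identities $\dbE^\dbP[(M_t-M_s)Z]=0$ (and likewise for $N$) for bounded $\cF_s$-measurable $Z$ are tested against functionals of $(X,I)$, the uniform moment bound providing the uniform integrability needed to pass to the limit, and the discontinuity of $I$ in the $\t$-variable is handled exactly as in \Cref{rem:independent}, approximating the relevant indicators by continuous functions and using that the $\t$-marginal of $\dbP$ has at most countably many atoms. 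Finally, the conditional independence of $\cF_t^I$ and $\cF_T$ given $\cF_t$ is, by \Cref{rem:independent}, equivalent to the family of equalities $\dbE^\dbP[\f(X)\psi(\t)]=0$ for $\f$ bounded Borel orthogonal to $\cF_t$ and $\psi$ supported on $[0,t]$, and \Cref{rem:independent} shows precisely that these equalities survive weak limits (via Lusin's theorem to reduce to continuous $\f$). Hence $\dbP\in\cR(m)$.

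Assembling the three properties, $\cR(m)$ is a weakly closed, tight, uniformly $p$-integrable subset of $\cP_p(\Omega)$, hence $\cW_p$-compact. The main obstacle is the closedness step, and within it the preservation of the randomised-stopping structure---the conditional-independence constraint---under the weak limit; since this is exactly the content of \Cref{rem:independent}, the proof reduces to combining that remark with the routine moment and martingale-problem estimates. A minor subtlety worth flagging is that the martingale-problem stability must be carried out for the joint filtration $\dbF$ generated by $(X,I)$ rather than for $\dbF^X$, which is why the discontinuity of $I$ in $\t$ must be addressed; this causes no genuine difficulty because the $\t$-marginal of the limiting measure charges only countably many points.
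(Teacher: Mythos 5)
Your proposal is correct and follows essentially the same route as the paper: tightness, closedness under weak limits (with the conditional-independence constraint preserved exactly via \Cref{rem:independent}), and uniform $p$-integrability to upgrade weak convergence to $\cW_p$-convergence. The only cosmetic difference is that the paper obtains tightness immediately from the observation that $\{\dbP_X:\dbP\in\cR(m)\}$ is a singleton (uniqueness in law for the fixed-$m$ martingale problem), whereas you rederive it through moment bounds and Aldous's criterion.
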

\begin{proof}
Fix $m \in \cP_p(\Omega)$. By compactness of $[0,T]$ and the fact that $\{\dbP_X : \dbP \in \cR(m)\}$ is a singleton, it is clear that $\cR(m)$ is tight. Let $(\dbP^n)_{n\in\N}$ be a sequence valued in $\cR(m)$, which weakly converges to some $\dbP$. Cleary, $\dbP$ satisfies the martingale problem characterising $\cR(m)$. By \Cref{rem:independent}, we also see that  $\cF_t^I$ is conditionally independent of $\cF_T$ given $\cF_t$ under $\dbP$. Therefore, $\dbP \in \cR(m)$.

\medskip
Note also that, since $\dbP^n_{X_\smallfont{0}} = \l \in \cP_p(\dbR^d)$ for all $n \in\N$, and that $b$ and $\si$ are Lipschitz-continuous in $x$ uniformly in $t$, then the $(\dbP^n)_{n \in\N}$---and, in fact, all the elements of $\cR(m)$---are $p$-uniformly integrable. Hence the convergence of $(\dbP^n)_{n\in\N}$ to $\dbP$ also holds for $\cW_p$ (see \citeauthor*{carmona2018probabilisticI} \cite[Theorem 5.5]{carmona2018probabilisticI}), and this completes the proof.
\end{proof}

\subsection{Density of pure stopping times}

For $m \in \cP_p(\Omega)$, we denote by $\cR^{\rm pure}(m)$ the set of $\dbP \in \cR(m)$ for which there exists a continuous $\Phi : \cC^d \longrightarrow [0,T]$ such that $ \t = \Phi(X)$, $\dbP$--a.s.

\begin{proposition}\label{prop:pure-stopping}
Assume {$\si \si^\top$} is invertible. For all $m \in \cP_p(\Omega)$, $\cR(m)$ is the $\cW_p$-closure of $\cR^{\rm pure}(m)$.
\end{proposition}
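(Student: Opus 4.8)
The plan is to prove that the $\cW_p$-closure of $\cR^{\mathrm{pure}}(m)$ both is contained in and contains $\cR(m)$. The first containment is immediate: $\cR^{\mathrm{pure}}(m)\subseteq\cR(m)$ by definition, and $\cR(m)$ is $\cW_p$-compact — hence $\cW_p$-closed — by \Cref{prop:compact}. Everything therefore reduces to showing $\cR(m)\subseteq\overline{\cR^{\mathrm{pure}}(m)}$, the closure being taken for $\cW_p$.

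Fix $\dbP\in\cR(m)$. All elements of $\cR(m)$ share the same $X$-marginal $\mu\coloneqq\dbP_X$ (the martingale problem in \Cref{def:P} being well-posed), and, as already recorded, every such $\dbP$ is a randomised $\dbF^X$-stopping time over $(\cC^d,\dbF^X,\mu)$: disintegrating $\dbP$ along $\mu$ produces an $\dbF^X$-adapted, nondecreasing, right-continuous process $A$ with $A_T=1$ $\mu$-a.s., and on an enlargement carrying $U\sim\mathrm{Unif}[0,1]$ independent of $\cF_T^X$ the pair $(\h\t,X)$, with $\h\t\coloneqq\inf\{t:A_t\ge U\}$, has law $\dbP$. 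The first step is to reduce to the finitely-valued case: replacing $A$ by the process equal to $A_{kT/n}$ on $[kT/n,(k+1)T/n)$ gives randomised stopping times $\h\t_n\downarrow\h\t$ $\mu$-a.s., and since the $X$-marginal is fixed and the laws are $p$-uniformly integrable, the corresponding measures converge to $\dbP$ for $\cW_p$ — by the very argument used in Step~$2$ of the proof of \Cref{theorem:existence}. So it suffices to approximate, for each fixed $n$, a finitely-valued randomised stopping rule, encoded by $\cF_{kT/n}^X$-measurable conditional stopping probabilities $q_0,\dots,q_{n-1}$.

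The crux is to realise such a rule as $\h\t=\Phi(X)$ with $\Phi:\cC^d\to[0,T]$ continuous. The idea is to \emph{externalise} the decision at time $kT/n$ by drawing its randomness from a short, not-yet-observed increment of $X$: working on a finer grid of step $\d\ll 1/n$, the increment $X_{kT/n}-X_{kT/n-\d}$ is, conditionally on $\cF_{kT/n-\d}^X$, close to a centred Gaussian with covariance $\d\,(\si\si^\top)(kT/n,X_{kT/n-\d},m)$, which is nondegenerate precisely because $\si\si^\top$ is assumed invertible — this is where the hypothesis is used. Composing a fixed rescaled coordinate of that increment with a one-dimensional Gaussian c.d.f.\ produces a variable $V_k^\d$ that is asymptotically uniform on $[0,1]$ and asymptotically independent of $\cF_{kT/n-\d}^X$; one then replaces $q_k$ by the value $q_k^\d$ it takes with $X$ frozen on $[kT/n-\d,kT/n]$ — an $\cF_{kT/n-\d}^X$-measurable quantity converging to $q_k$ by path-continuity, after a preliminary Lusin approximation of $q_k$ by a continuous functional — and stops at $kT/n$ exactly when $V_k^\d\le q_k^\d$ and no earlier stop has occurred. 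This produces an honest $\dbF^X$-stopping time, hence a pure law $\dbP^n$ for which $\dbP^n_{X_0}=\l$, $\dbF=\dbF^X$, and the martingale problem of \Cref{def:P} holds automatically (with the conditional-independence condition free, since $\cF_t^I\subseteq\cF_t^X$); a final smoothing of the finitely many indicators on ``transition'' sets of vanishing $\mu$-mass upgrades $\Phi$ to a genuinely continuous map without disturbing these properties up to a $\mu$-negligible set. Taking $\d=\d(n)\to0$ fast enough and combining with the discretisation step, a diagonal extraction yields $\dbP^n\in\cR^{\mathrm{pure}}(m)$ with $\cW_p(\dbP^n,\dbP)\to0$. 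The main obstacle is precisely this externalisation: one must control, quantitatively and cumulatively over the $\lesssim n$ decision times, both the error from replacing $q_k$ by $q_k^\d$ and the non-exact uniformity and independence of the $V_k^\d$ — for instance through Gaussian coupling bounds for increments of the SDE — while checking that the smoothing does not destroy the stopping-time (hence immersion) structure on which the martingale problem rests.
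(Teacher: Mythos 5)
Your route is the same as the paper's, which does not carry out the construction itself but invokes \cite[Theorem 6.4 and Proposition 6.2]{carmona2017mean}: the easy inclusion follows from $\cR^{\rm pure}(m)\subseteq\cR(m)$ and the $\cW_p$-closedness of $\cR(m)$ (\Cref{prop:compact}), and the converse inclusion is obtained by approximating the randomised stopping time encoded by the disintegrated process $A$ with pure $\dbF^X$--stopping times, extracting the needed randomisation from conditionally non-degenerate increments of $X$---which is exactly where the invertibility of $\si\si^\top$ enters in the reference as well (there, to guarantee that $\dbP_X$ is non-atomic so that \cite[Proposition 6.2]{carmona2017mean} applies)---followed by the same upgrade from weak convergence to $\cW_p$-convergence via $p$-uniform integrability. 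So you have essentially reconstructed the argument behind the cited result rather than found a different one.

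However, as written the proposal stops short of a proof exactly where you flag ``the main obstacle'': the cumulative, quantitative control of the approximate uniformity and approximate independence of the $V_k^\d$ over the $\sim n$ decision epochs is the entire content of the cited construction, and announcing that it must be controlled is not the same as controlling it. One step deserves a concrete warning beyond that. Membership in $\cR^{\rm pure}(m)$ requires the resulting law to lie in $\cR(m)$, hence the conditional independence of $\cF_t^I$ from $\cF_T$ given $\cF_t$; for a time of the form $\Phi(X)$ this forces $\1_{\{t<\Phi(X)\}}$ to be $\dbP$--a.s.\ equal to an $\cF_t$-measurable variable, i.e.\ $\Phi(X)$ must remain, up to null sets, a genuine $\dbF^X$--stopping time after your ``final smoothing''. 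A Lusin--Tietze modification of the discontinuous functional agrees with it only off a set of small (not zero) $\mu$-mass and is in general anticipative there, so it may produce a law that does not belong to $\cR(m)$ at all; the continuity of $\Phi$ has to be built into the construction (through continuous, non-anticipative decision functionals whose level crossings are a.s.\ transversal) rather than appended afterwards. This is precisely the part of the argument that the paper outsources to \cite{carmona2017mean}.
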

\begin{proof}
This is a straightforward adaptation of \cite[Theorem 6.4]{carmona2017mean} to our context, replacing their process $S$ with $X$. In particular, since {$\si \si^\top$} is invertible, {the kernel of $\si$ is reduced to $0$, and therefore} the distribution $\dbP_X$ is non-atomic for all $\dbP \in \cR(m)$, which allows us to use \cite[Proposition 6.2]{carmona2017mean}. Following the same procedure, we may thus construct a sequence of $\dbF^X$--stopping times $(\t^n)_{n \in \N}$ such that $\dbP_{(\t^n, X)}$ converges weakly to $\dbP$ as $n \to \infty$. Since this sequence is $p$-uniformly integrable, this implies that this convergence also holds for $\cW_p$, and therefore $\cR(m)$ is included in the $\cW_p$-closure of $\cR^{\rm pure}(m)$. As $\cR^{\rm pure}(m) \subset \cR(m)$ and $\cR(m)$ is $\cW_p$-closed by \Cref{prop:compact}, we easily see that the converse inclusion holds.
\end{proof}


\subsection{Hemicontinuity}

\begin{lemma}\label{lem:hemicontinuity}
Assume $b$ and $\si$ are bounded. Then the set-valued map $\cP_p(\Omega) \ni m \longmapsto \cR(m)$

\medskip
{$(i)$} has nonempty and compact values$;$

\smallskip
{$(ii)$} has a relatively compact range $\cP(\cP_p(\Omega));$

\smallskip
{$(iii)$} is hemicontinuous. 
\end{lemma}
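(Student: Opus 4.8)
The plan is to verify the three properties in order, treating $\cR(m)$ as a subset of the (fixed) locally convex space of bounded signed measures on $\Omega=[0,T]\times\cC^d$ endowed with the topology of weak convergence (which, thanks to the $p$-uniform integrability coming from boundedness of $b,\si$ and the fixed initial law $\l$, coincides on our sets with $\cW_p$-convergence, by \citeauthor*{carmona2018probabilisticI} \cite[Theorem 5.5]{carmona2018probabilisticI}). For $(i)$, nonemptiness is the observation already made in the text: for each $m$ one can take any $\dbP$ under which $X$ is the (unique, since $b,\si$ are Lipschitz) strong solution of the SDE with coefficients frozen at $m$ and $I$ generated by a stopping time measurable with respect to an independent source of randomness — the conditional-independence requirement is then automatic, and this gives a point of $\cR(m)$. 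Compactness of $\cR(m)$ for each fixed $m$ is exactly \Cref{prop:compact} (or re-derived: tightness from compactness of $[0,T]$ plus Aldous's criterion for $X$, and closedness because both the martingale problem in \Cref{def:P} and the conditional-independence property pass to weak limits, the latter by the argument in \Cref{rem:independent}).

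For $(ii)$, I would show that $\bigcup_{m\in\cP_p(\Omega)}\cR(m)$ is contained in a single $\cW_p$-compact set. The key point is that the bound used in tightness does not depend on $m$: boundedness of $b$ and $\si$ by the common constant $L$ forces, for every $\dbP\in\cR(m)$ and every $m$, the second marginal $\dbP_X$ to lie in the fixed set $\hat\cP_L$ of semimartingale measures with characteristics bounded by $L$ and $\dbP_{X_0}=\l$ — this is precisely the set shown $\cW_p$-compact in Step 1 of the proof of \Cref{theorem:existence}. Combined with compactness of $[0,T]$ for the $\t$-marginal, the whole range is relatively compact in $\cP_p(\Omega)$, which also makes $\cP(\cP_p(\Omega))$-valued statement meaningful; uniform $p$-integrability again upgrades weak relative compactness to $\cW_p$-relative compactness.

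For $(iii)$, hemicontinuity splits into lower and upper hemicontinuity. Upper hemicontinuity: since the range is relatively compact, it suffices to show the graph $\{(m,\dbP):\dbP\in\cR(m)\}$ is closed; so take $m^n\to m$ and $\dbP^n\in\cR(m^n)$ with $\dbP^n\to\dbP$, and check that the limit lies in $\cR(m)$ — the martingale property $M_\cdot=X_\cdot-\int_0^\cdot b(s,X,m^n)\,\mathrm{d}s$ passes to the limit using continuity of $b$ in its last argument together with $m^n\to m$ (and similarly for $N_\cdot$ with $\si\si^\top$), while the conditional-independence property passes to the limit by the stability argument of \Cref{rem:independent}. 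Lower hemicontinuity: given $m^n\to m$ and a target $\dbP\in\cR(m)$, I would realise $\dbP$ on an auxiliary space as $(\t,X)$ with $X$ solving the SDE with coefficients frozen at $m$ driven by a Brownian motion $W$ and $\t$ a randomised stopping time, then define $X^n$ as the strong solution of the same SDE with coefficients frozen at $m^n$ and the same driving noise, keeping the same $\t$; standard SDE estimates give $\dbE[\lVert X^n-X\rVert_\infty^p]\to 0$ because $\cW_p(m^n,m)\to 0$ and $b,\si$ are Lipschitz, so $\dbP^n\coloneqq\mathrm{Law}(\t,X^n)\in\cR(m^n)$ and $\dbP^n\to\dbP$ in $\cW_p$. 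The main obstacle is the lower hemicontinuity step: one must make sure that keeping $\t$ fixed while perturbing $X^n$ genuinely preserves membership in $\cR(m^n)$ — that is, that the conditional independence of $\cF_t^I$ and $\cF_T^n$ given $\cF_t^n$ still holds when the $X$-part is changed; this is fine here because $\t$ was built independently of the driving Brownian motion, but it is the point that needs care, and it is also why the extension of the canonical space to carry $W$ explicitly (as done in the proof of \Cref{theorem:existence}) is convenient.
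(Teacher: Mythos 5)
Your plan follows the paper's proof essentially step for step: $(i)$ via \Cref{prop:compact}, $(ii)$ via the uniform bound $L$ on the characteristics forcing all second marginals into a single compact set of semimartingale laws (the paper invokes \cite[Theorem 4]{meyer1984tightness} for this; your appeal to the set $\hat\cP_L$ from Step 1 of the proof of \Cref{theorem:existence} is the same fact), and $(iii)$ via closed graph for upper hemicontinuity and the ``same noise, same $\t$, coefficients frozen at $m_n$'' coupling for lower hemicontinuity.

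One correction on the step you yourself flag as delicate. Your justification for why conditional independence survives the replacement of $\tilde X$ by $\tilde X^n$ --- namely that ``$\t$ was built independently of the driving Brownian motion'' --- is not right and not what is needed: for a generic $\dbP\in\cR(m)$ the randomised stopping time is adapted to (hence correlated with) $X$, and it is only required that $\cF^I_t$ be conditionally independent of $\cF_T$ given $\cF_t$. The correct argument, which is the one the paper uses, is that invertibility of $\si\si^\top$ makes $\tilde X$ and every $\tilde X^n$ generate the same filtration (that of $\tilde W$ together with $\tilde X_0$), so the conditional-independence statement for $(\tilde\t,\tilde X)$ is literally the same statement for $(\tilde\t,\tilde X^n)$. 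With that substitution your argument is complete and coincides with the paper's.
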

\begin{proof}
$ (i)$ For all $m \in \cP_p(\Omega)$, it is clear that $\cR(m)$ is nonempty. We also know from \Cref{prop:compact} that it has compact values. 

\medskip
$(ii)$ By boundedness of $b$ and $\si$, the set $\{ \dbP_X : \dbP \in \cup_{m \in \cP_p(\Omega)} \cR(m) \}$ is a subset of the set of continuous semimartingale measures with bounded characteristics, which is compact (see \citeauthor*{meyer1984tightness} \cite[Theorem 4]{meyer1984tightness}). We easily deduce from this fact that $\cP(\cP_p(\Omega))$ is relatively compact. 

\medskip
$(iii)$ By $(ii)$, $\cP$ has a relatively compact range. Therefore, it is sufficient to prove that its graph is closed to prove its upper-hemicontinuity. Given two sequences $(m_n)_{n\in\N}$ valued $\cP_p(\Omega)$, and $(\dbP^n)_{n\in\N}$, such that for any $n\in\N$, $\dbP^n$ is valued in $\cP(m_n)$, converging respectively to some $m$ and $\dbP$, both in $\cP_p(\Omega)$, we easily see that the martingale problems in \Cref{def:P} pass to the limit, and then, along with \Cref{rem:independent}, that $\dbP \in \cR(m)$.

\medskip
We now prove the lower-hemicontinuity of $\cP$. Let $(m_n)_{n\in\N}$ be valued in $\cP_p(\Omega)$, and converging to $m \in \cP_p(\Omega)$. Fix also $\dbP \in \cR(m)$. We want to construct a sequence $(\dbP^n)_{n\in\N}$ such that for any $n\in\N$, $\P^n$ is valued in $\cP(m_n)$, and which converges to $\dbP$. {By the same arguments as in the proof of Theorem \ref{theorem:existence}, we may enlarge the canonical space and take} $(\tilde \Omega, \tilde \cF_T, \tilde \dbF \coloneqq  \{\tilde \cF_t\}_{0 \le t \le T}, \tilde \dbP)$ a filtered probability space endowed with $\tilde \t$, $\tilde X$ such that $\tilde \dbP_{(\tilde \t, \tilde X)} = \dbP$ and 
\[
 \tilde X_t = \tilde X_0 + \int_0^t b(s, \tilde X, m)\mathrm{d}s + \int_0^t \si(s, \tilde X, m)\mathrm{d}\tilde W_s, \; \mbox{$\tilde \dbP$--a.s.,} 
 \]
where $\tilde W$ is an $(\tilde \dbF,\tilde \dbP)$--Brownian motion. For $n \in \N$, we introduce $\tilde X^n$, defined as the unique strong solution of the SDE
\[
\tilde X_t^n = \tilde X_0 + \int_0^t b(s, \tilde X^n, m_n)\mathrm{d}s + \int_0^t \si(s, \tilde X^n, m_n)\mathrm{d}\tilde W_s. 
\]
We then define $\dbP^n \coloneqq  \tilde \dbP_{(\tilde \t, \tilde X^\smallfont{n})}$. It is clear that $\dbP^n$ satisfies the martingale problem of \Cref{def:P}. Moreover, since $\si$ is invertible, the fact that $\cF^I$ is independent of $\cF_T^{\tilde X}$ conditionally on $\cF_t$ implies that it is also the case under each $\dbP^n$, as the $\tilde X^n$ above are driven by the same Brownian motion as $\tilde X$. Therefore, $\dbP^n \in \cP(m_n)$ for all $n \in\N $. Note also that
\[
 \cW_p^p(\dbP^n, \dbP) = \cW_p^p\big(\tilde \dbP_{(\tilde \t, \tilde X)}, \tilde \dbP_{(\tilde \t, \tilde X^\smallfont{n})}\big) \le C_p\dbE^{\tilde \dbP}\big[ \lVert \tilde X^n - \tilde X \rVert_\infty^p \big],
 \] 
for some constant $C_p \ge 0$, and that the right-hand side term tends to $0$ as $n \to \infty$, by standard SDEs' estimates, due to the fact that $\cW_p(m_n, m) \longrightarrow 0$ as $n \longrightarrow \infty$. This provides the desired result.
 \end{proof}

\end{appendix}

\section*{Declarations}

\paragraph*{Conflict of interest} The authors declare that they have no conflict of interest.

\paragraph*{Competing Interests} The authors have no competing interests to declare that are relevant to the content of this article.

{\footnotesize

}

\begin{thebibliography}{70}
\providecommand{\natexlab}[1]{#1}
\providecommand{\url}[1]{\texttt{#1}}
\expandafter\ifx\csname urlstyle\endcsname\relax
  \providecommand{\doi}[1]{doi: #1}\else
  \providecommand{\doi}{doi: \begingroup \urlstyle{rm}\Url}\fi

\bibitem[Aliprantis and Border(2006)]{aliprantis2006infinite}
C.D. Aliprantis and K.~Border.
\newblock \emph{Infinite dimensional analysis: a hitchhiker's guide}.
\newblock Springer-Verlag Berlin Heidelberg, 3rd edition, 2006.

\bibitem[Assaf and Samuel-Cahn(1998)]{assaf1998optimal}
D.~Assaf and E.~Samuel-Cahn.
\newblock Optimal multivariate stopping rules.
\newblock \emph{Journal of Applied Probability}, 35\penalty0 (3):\penalty0
  693--706, 1998.

\bibitem[Aumann(1964)]{aumann1964mixed}
R.J. Aumann.
\newblock Mixed and behavior strategies in infinite extensive games.
\newblock In M.~Dresher, L.S. Shapley, and A.W. Tucker, editors, \emph{Advances
  in game theory}, volume~52 of \emph{Annals of mathematics studies}, pages
  627--650. Princeton University Press, 1964.

\bibitem[Baxter and Chacon(1977)]{baxter1977compactness}
J.R. Baxter and R.V. Chacon.
\newblock Compactness of stopping times.
\newblock \emph{Zeitschrift f{\"u}r Wahrscheinlichkeitstheorie und verwandte
  Gebiete}, 40\penalty0 (3):\penalty0 169--181, 1977.

\bibitem[Bensoussan and Friedman(1974)]{bensoussan1974nonlinear}
A.~Bensoussan and A.~Friedman.
\newblock Nonlinear variational inequalities and differential games with
  stopping times.
\newblock \emph{Journal of Functional Analysis}, 16\penalty0 (3):\penalty0
  305--352, 1974.

\bibitem[Bensoussan and Friedman(1977)]{bensoussan1977nonzero}
A.~Bensoussan and A.~Friedman.
\newblock Nonzero-sum stochastic differential games with stopping times and
  free boundary problems.
\newblock \emph{Transactions of the American Mathematical Society},
  231\penalty0 (2):\penalty0 275--327, 1977.

\bibitem[Bertucci(2018)]{bertucci2018optimal}
C.~Bertucci.
\newblock Optimal stopping in mean field games, an obstacle problem approach.
\newblock \emph{Journal de Math{\'e}matiques Pures et Appliqu{\'e}es},
  120:\penalty0 165--194, 2018.

\bibitem[Bertucci(2021)]{bertucci2021monotone}
C.~Bertucci.
\newblock Monotone solutions for mean field games master equations: finite
  state space and optimal stopping.
\newblock \emph{Journal de l'{\'E}cole Polytechnique---Math{\'e}matiques},
  8:\penalty0 1099--1132, 2021.

\bibitem[Billingsley(1999)]{billingsley1999convergence}
P.~Billingsley.
\newblock \emph{Convergence of probability measures}.
\newblock Wiley series in probability and statistics. John Wiley \& Sons Inc.,
  New York, 2nd edition, 1999.

\bibitem[Bismut(1977)]{bismut1977probleme}
J.-M. Bismut.
\newblock Sur un probl{\`e}me de {D}ynkin.
\newblock \emph{Zeitschrift f{\"u}r Wahrscheinlichkeitstheorie und Verwandte
  Gebiete}, 39:\penalty0 31--53, 1977.

\bibitem[Bouveret et~al.(2020)Bouveret, Dumitrescu, and
  Tankov]{bouveret2020mean}
G.~Bouveret, R.~Dumitrescu, and P.~Tankov.
\newblock Mean-field games of optimal stopping: a relaxed solution approach.
\newblock \emph{SIAM Journal on Control and Optimization}, 58\penalty0
  (4):\penalty0 1795--1821, 2020.

\bibitem[Bouveret et~al.(2022)Bouveret, Dumitrescu, and
  Tankov]{bouveret2022technological}
G.~Bouveret, R.~Dumitrescu, and P.~Tankov.
\newblock Technological change in water use: a mean-field game approach to
  optimal investment timing.
\newblock \emph{Operations Research Perspectives}, 9:\penalty0 100225, 2022.

\bibitem[Briani and Cardaliaguet(2018)]{briani2018stable}
A.~Briani and P.~Cardaliaguet.
\newblock Stable solutions in potential mean field game systems.
\newblock \emph{Nonlinear Differential Equations and Applications NoDEA},
  25\penalty0 (1):\penalty0 1--26, 2018.

\bibitem[Buckdahn and Engelbert(1984)]{buckdahn1984randomized}
R.~Buckdahn and H.-J. Engelbert.
\newblock Randomized stopping times: {D}oob's optiomal sampling theorem and
  optimal stopping.
\newblock \emph{Mathematische Nachrichten}, 115\penalty0 (1):\penalty0
  237--247, 1984.

\bibitem[Cardaliaguet et~al.(2019)Cardaliaguet, Delarue, Lasry, and
  Lions]{cardaliaguet2019master}
P.~Cardaliaguet, F.~Delarue, J.-M. Lasry, and P.-L. Lions.
\newblock \emph{The master equation and the convergence problem in mean field
  games}, volume 201 of \emph{Annals of mathematics studies}.
\newblock Princeton University Press, 2019.

\bibitem[Carmona and Delarue(2014)]{carmona2014master}
R.~Carmona and F.~Delarue.
\newblock The master equation for large population equilibriums.
\newblock In D.~Crisan, B.~Hambly, and T.~Zariphopoulou, editors,
  \emph{Stochastic analysis and applications 2014: in honour of Terry Lyons},
  volume 100 of \emph{Springer proceedings in mathematics and statistics},
  pages 77--128. Springer, 2014.

\bibitem[Carmona and Delarue(2018)]{carmona2018probabilisticI}
R.~Carmona and F.~Delarue.
\newblock \emph{Probabilistic theory of mean field games with applications
  {I}}, volume~83 of \emph{Probability theory and stochastic modelling}.
\newblock Springer International Publishing, 2018.

\bibitem[Carmona et~al.(2017)Carmona, Delarue, and Lacker]{carmona2017mean}
R.~Carmona, F.~Delarue, and D.~Lacker.
\newblock Mean field games of timing and models for bank runs.
\newblock \emph{Applied Mathematics \& Optimization}, 76\penalty0 (217--260),
  2017.

\bibitem[Cecchin and Delarue(2022)]{cecchin2022weak}
A.~Cecchin and F.~Delarue.
\newblock Weak solutions to the master equation of potential mean field games.
\newblock \emph{ArXiv preprint arXiv:2204.04315}, 2022.

\bibitem[Chalasani and Jha(2001)]{chalasani2001randomized}
P.~Chalasani and S.~Jha.
\newblock Randomized stopping times and {A}merican option pricing with
  transaction costs.
\newblock \emph{Mathematical Finance}, 11\penalty0 (1):\penalty0 33--77, 2001.

\bibitem[Dalang(1984)]{dalang1984arret}
R.C. Dalang.
\newblock Sur l'arr{\^e}t optimal de processus {\`a} temps multidimensionnel
  continu.
\newblock \emph{S{\'e}minaire de probabilit{\'e}s de Strasbourg},
  XVIII:\penalty0 379--390, 1984.

\bibitem[Delarue et~al.(2020)Delarue, Lacker, and Ramanan]{delarue2020master}
F.~Delarue, D.~Lacker, and K.~Ramanan.
\newblock From the master equation to mean field game limit theory: large
  deviations and concentration of measure.
\newblock \emph{The Annals of Probability}, 48\penalty0 (1):\penalty0 211--263,
  2020.

\bibitem[Dumitrescu et~al.(2021)Dumitrescu, Leutscher, and
  Tankov]{dumitrescu2021control}
R.~Dumitrescu, M.~Leutscher, and P.~Tankov.
\newblock Control and optimal stopping mean field games: a linear programming
  approach.
\newblock \emph{Electronic Journal of Probability}, 26\penalty0 (157):\penalty0
  1--49, 2021.

\bibitem[Dumitrescu et~al.(2022)Dumitrescu, Leutscher, and
  Tankov]{dumitrescu2022energy}
R.~Dumitrescu, M.~Leutscher, and P.~Tankov.
\newblock Energy transition under scenario uncertainty: a mean-field game
  approach.
\newblock \emph{ArXiv preprint arXiv:2210.03554}, 2022.

\bibitem[Dynkin(1969)]{dynkin1969game}
E.B. Dynkin.
\newblock A game-theoretic version of an optimal stopping problem.
\newblock \emph{Doklady Akademii Nauk SSSR}, 185\penalty0 (1):\penalty0 16--19,
  1969.

\bibitem[Dynkin and Yushkevich(1969)]{dynkin1969theorems}
E.B. Dynkin and A.A. Yushkevich.
\newblock \emph{Markov processes, theorems and problems}.
\newblock Plenum Press, 1969.

\bibitem[Ekren et~al.(2014)Ekren, Keller, Touzi, and Zhang]{ekren2014viscosity}
I.~Ekren, C.~Keller, N.~Touzi, and J.~Zhang.
\newblock On viscosity solutions of path dependent {PDE}s.
\newblock \emph{The Annals of Probability}, 42\penalty0 (1):\penalty0 204--236,
  2014.

\bibitem[Ekstr{\"o}m and Villeneuve(2006)]{ekstrom2006value}
E.~Ekstr{\"o}m and S.~Villeneuve.
\newblock On the value of optimal stopping games.
\newblock \emph{The Annals of Applied Probability}, 16\penalty0 (3):\penalty0
  1576--1596, 2006.

\bibitem[El~Karoui(1981)]{el1981aspects}
N.~El~Karoui.
\newblock Les aspects probabilistes du contr{\^o}le stochastique.
\newblock In \emph{\'Ecole d'{\'e}t{\'e} de probabilit{\'e}s de Saint-Flour
  IX--1979}, volume 876 of \emph{Lecture notes in mathematics}, pages 73--238.
  Springer, 1981.

\bibitem[El~Karoui et~al.(1997)El~Karoui, Peng, and Quenez]{el1997backward}
N.~El~Karoui, S.~Peng, and M.-C. Quenez.
\newblock Backward stochastic differential equations in finance.
\newblock \emph{Mathematical Finance}, 7\penalty0 (1):\penalty0 1--71, 1997.

\bibitem[Fan(1952)]{fan1952fixed}
K.~Fan.
\newblock Fixed-point and minimax theorems in locally convex topological linear
  spaces.
\newblock \emph{Proceedings of the National Academy of Sciences}, 38\penalty0
  (2):\penalty0 121--126, 1952.

\bibitem[Folland(1999)]{folland1999real}
G.B. Folland.
\newblock \emph{Real analysis: modern techniques and their applications},
  volume~40 of \emph{Pure and applied mathematics}.
\newblock John Wiley \& Sons, 2nd edition, 1999.

\bibitem[Friedman(1973)]{friedman1973stochastic}
A.~Friedman.
\newblock Stochastic games and variational inequalities.
\newblock \emph{Archive for Rational Mechanics and Analysis}, 51\penalty0
  (5):\penalty0 321--346, 1973.

\bibitem[Gangbo et~al.(2022)Gangbo, M{\'e}sz{\'a}ros, Mou, and
  Zhang]{gangbo2022mean}
W.~Gangbo, A.R. M{\'e}sz{\'a}ros, C.~Mou, and J.~Zhang.
\newblock Mean field games master equations with non-separable {H}amiltonians
  and displacement monotonicity.
\newblock \emph{The Annals of Probability}, 50\penalty0 (6):\penalty0
  2178--2217, 2022.

\bibitem[He et~al.(2023)He, Tan, and Zou]{he2023mean}
X.~He, X.~Tan, and J.~Zou.
\newblock A mean-field version of {B}ank--{E}l {K}aroui's representation of
  stochastic processes.
\newblock \emph{ArXiv preprint arXiv:2302.03300}, 2023.

\bibitem[Huang and Xie(2022)]{huang2022class}
J.~Huang and T.~Xie.
\newblock A class of mean-field games with optimal stopping and its inverse
  problem.
\newblock \emph{ArXiv preprint arXiv:2206.03095}, 2022.

\bibitem[Huang et~al.(2003)Huang, Caines, and Malham{\'e}]{huang2003individual}
M.~Huang, P.E. Caines, and R.P. Malham{\'e}.
\newblock Individual and mass behaviour in large population stochastic wireless
  power control problems: centralized and {N}ash equilibrium solutions.
\newblock In C.~Abdallah and F.~Lewis, editors, \emph{Proceedings of the 42nd
  IEEE conference on decision and control, 2003}, pages 98--103. IEEE, 2003.

\bibitem[Huang et~al.(2006)Huang, Malham{\'e}, and Caines]{huang2006large}
M.~Huang, R.P. Malham{\'e}, and P.E. Caines.
\newblock Large population stochastic dynamic games: closed-loop
  {M}c{K}ean--{V}lasov systems and the {N}ash certainty equivalence principle.
\newblock \emph{Communications in Information \& Systems}, 6\penalty0
  (3):\penalty0 221--252, 2006.

\bibitem[Huang et~al.(2007{\natexlab{a}})Huang, Caines, and
  Malham{\'e}]{huang2007invariance}
M.~Huang, P.E. Caines, and R.P. Malham{\'e}.
\newblock An invariance principle in large population stochastic dynamic games.
\newblock \emph{Journal of Systems Science and Complexity}, 20\penalty0
  (2):\penalty0 162--172, 2007{\natexlab{a}}.

\bibitem[Huang et~al.(2007{\natexlab{b}})Huang, Caines, and
  Malham{\'e}]{huang2007large}
M.~Huang, P.E. Caines, and R.P. Malham{\'e}.
\newblock Large-population cost-coupled {LQG} problems with nonuniform agents:
  individual-mass behavior and decentralized $\varepsilon$-{N}ash equilibria.
\newblock \emph{IEEE Transactions on Automatic Control}, 52\penalty0
  (9):\penalty0 1560--1571, 2007{\natexlab{b}}.

\bibitem[Huang et~al.(2007{\natexlab{c}})Huang, Caines, and
  Malham{\'e}]{huang2007nash}
M.~Huang, P.E. Caines, and R.P. Malham{\'e}.
\newblock The {N}ash certainty equivalence principle and {M}c{K}ean--{V}lasov
  systems: an invariance principle and entry adaptation.
\newblock In D.~Castanon and J.~Spall, editors, \emph{46th IEEE conference on
  decision and control, 2007}, pages 121--126. IEEE, 2007{\natexlab{c}}.

\bibitem[Kobylanski et~al.(2014)Kobylanski, Quenez, and Roger~de
  Campagnolle]{kobylanski2014dynkin}
M.~Kobylanski, M.-C. Quenez, and M.~Roger~de Campagnolle.
\newblock Dynkin games in a general framework.
\newblock \emph{Stochastics An International Journal of Probability and
  Stochastic Processes}, 86\penalty0 (2):\penalty0 304--329, 2014.

\bibitem[Kuhn(1953)]{kuhn1953extensive}
H.W. Kuhn.
\newblock Extensive games and the problem of information.
\newblock In H.W. Kuhn and A.W. Tucker, editors, \emph{Contributions to the
  theory of games, volume II}, volume~28 of \emph{Annals of mathematics
  studies}, pages 193--216. Princeton University Press, 1953.

\bibitem[Lasry and Lions(2006{\natexlab{a}})]{lasry2006jeux}
J.-M. Lasry and P.-L. Lions.
\newblock Jeux {\`a} champ moyen. {I}--{L}e cas stationnaire.
\newblock \emph{Comptes Rendus Math{\'e}matique}, 343\penalty0 (9):\penalty0
  619--625, 2006{\natexlab{a}}.

\bibitem[Lasry and Lions(2006{\natexlab{b}})]{lasry2006jeux2}
J.-M. Lasry and P.-L. Lions.
\newblock Jeux {\`a} champ moyen. {II}--{H}orizon fini et contr{\^o}le optimal.
\newblock \emph{Comptes Rendus Math{\'e}matique}, 343\penalty0 (10):\penalty0
  679--684, 2006{\natexlab{b}}.

\bibitem[Lasry and Lions(2007)]{lasry2007mean}
J.-M. Lasry and P.-L. Lions.
\newblock Mean field games.
\newblock \emph{Japanese Journal of Mathematics}, 2\penalty0 (1):\penalty0
  229--260, 2007.

\bibitem[Lepeltier and Maingueneau(1984)]{lepeltier1984jeu}
J.-P. Lepeltier and M.A. Maingueneau.
\newblock Le jeu de {D}ynkin en th{\'e}orie g{\'e}n{\'e}rale sans
  l'hypoth{\`e}se de {M}okobodski.
\newblock \emph{Stochastics: An International Journal of Probability and
  Stochastic Processes}, 13\penalty0 (1--2):\penalty0 25--44, 1984.

\bibitem[McKean(1967)]{mckean1967propagation}
Henry~P McKean.
\newblock Propagation of chaos for a class of non-linear parabolic equations.
\newblock \emph{Stochastic Differential Equations (Lecture Series in
  Differential Equations, Session 7, Catholic Univ., 1967)}, pages 41--57,
  1967.

\bibitem[Meyer(1978)]{meyer1978convergence}
P.-A. Meyer.
\newblock Convergence faible et compacit{\'e} des temps d'arr{\^e}t d'apr{\`e}s
  {B}axter et {C}hacon.
\newblock \emph{S\'eminaire de probabilit\'es de Strasbourg}, XII:\penalty0
  411--423, 1978.

\bibitem[Meyer and Zheng(1984)]{meyer1984tightness}
P.-A. Meyer and W.A. Zheng.
\newblock Tightness criteria for laws of semimartingales.
\newblock \emph{Annales de l'institut Henri Poincar{\'e}, Probabilit{\'e}s et
  Statistiques $({\mathrm{B}})$}, 20\penalty0 (4):\penalty0 353--372, 1984.

\bibitem[Milgrom and Roberts(58)]{milgrom1990rationalizability}
P.~Milgrom and J.~Roberts.
\newblock Rationalizability, learning, and equilibrium in games with strategic
  complementarities.
\newblock \emph{Econometrica}, 6:\penalty0 1255--1277, 58.

\bibitem[Mou and Zhang(2020)]{mou2020wellposedness}
C.~Mou and J.~Zhang.
\newblock Wellposedness of second order master equations for mean field games
  with nonsmooth data.
\newblock \emph{Memoirs of the American Mathematical Society}, to appear, 2020.

\bibitem[Nualart(1992)]{nualart1992randomized}
D.~Nualart.
\newblock Randomized stopping points and optimal stopping on the plane.
\newblock \emph{The Annals of Probability}, 20\penalty0 (2):\penalty0 883--900,
  1992.

\bibitem[Nutz(2018)]{nutz2018mean}
M.~Nutz.
\newblock A mean field game of optimal stopping.
\newblock \emph{SIAM Journal on Control and Optimization}, 56\penalty0
  (2):\penalty0 1206--1221, 2018.

\bibitem[Nutz et~al.(2020)Nutz, San~Mart\'in, and Tan]{nutz2020convergence}
M.~Nutz, J.~San~Mart\'in, and X.~Tan.
\newblock Convergence to the mean field game limit: a case study.
\newblock \emph{The Annals of Applied Probability}, 30\penalty0 (1):\penalty0
  259--286, 2020.

\bibitem[Pennanen and Perkki{\"o}(2018)]{pennanen2018optimal}
T.~Pennanen and A.-P. Perkki{\"o}.
\newblock Optimal stopping without snell envelopes.
\newblock \emph{ArXiv preprint arXiv:1812.04112}, 2018.

\bibitem[Peskir and Shiryaev(2006)]{peskir2006optimal}
G.~Peskir and A.N. Shiryaev.
\newblock \emph{Optimal stopping and free-boundary problems}.
\newblock Lectures in mathematics. ETH Z{\"u}rich. Birkh{\"a}user Basel, 2006.

\bibitem[Rosenberg et~al.(2001)Rosenberg, Solan, and
  Vieille]{rosenberg2001stopping}
D.~Rosenberg, E.~Solan, and N.~Vieille.
\newblock Stopping games with randomized strategies.
\newblock \emph{Probability Theory and Related Fields}, 119:\penalty0 433--451,
  2001.

\bibitem[Shiryaev(1978)]{shiryaev1978optimal}
A.N. Shiryaev.
\newblock \emph{Optimal stopping rules}, volume~8 of \emph{Stochastic modelling
  and applied probability}.
\newblock Springer-Verlag New York, 1978.

\bibitem[Shmaya and Solan(2014)]{shmaya2014equivalence}
E.~Shmaya and E.~Solan.
\newblock Equivalence between random stopping times in continuous time.
\newblock Technical report, Kellogg School of Management and Tel Aviv
  University, 2014.

\bibitem[Solan et~al.(2012)Solan, Tsirelson, and Vieille]{solan2012random}
E.~Solan, B.~Tsirelson, B., and N.~Vieille.
\newblock Random stopping times in stopping problems and stopping games.
\newblock Technical report, Tel Aviv University and HEC Paris, 2012.

\bibitem[Soner et~al.(2011)Soner, Touzi, and Zhang]{soner2011quasi}
H.M. Soner, N.~Touzi, and J.~Zhang.
\newblock Quasi-sure stochastic analysis through aggregation.
\newblock \emph{Electronic Journal of Probability}, 16\penalty0 (2):\penalty0
  1844--1879, 2011.

\bibitem[Talbi et~al.(2022)Talbi, Touzi, and Zhang]{talbi2022finite}
M.~Talbi, N.~Touzi, and J.~Zhang.
\newblock From finite population optimal stopping to mean field optimal
  stopping.
\newblock \emph{ArXiv preprint arXiv:2210.16004}, 2022.

\bibitem[Talbi et~al.(2023{\natexlab{a}})Talbi, Touzi, and
  Zhang]{talbi2023dynamic}
M.~Talbi, N.~Touzi, and J.~Zhang.
\newblock Dynamic programming equation for the mean field optimal stopping
  problem.
\newblock \emph{SIAM Journal on Control and Optimization}, to appear,
  2023{\natexlab{a}}.

\bibitem[Talbi et~al.(2023{\natexlab{b}})Talbi, Touzi, and
  Zhang]{talbi2023viscosity}
M.~Talbi, N.~Touzi, and J.~Zhang.
\newblock Viscosity solutions for obstacle problems on {W}asserstein space.
\newblock \emph{SIAM Journal on Control and Optimization}, 61\penalty0
  (3):\penalty0 1712--1736, 2023{\natexlab{b}}.

\bibitem[Tarski(1955)]{tarski1955lattice}
A.~Tarski.
\newblock A lattice-theoretical fixpoint theorem and its applications.
\newblock \emph{Pacific Journal of Mathematics}, 5\penalty0 (2):\penalty0
  285--309, 1955.

\bibitem[Touzi and Vieille(2002)]{touzi2002continuous}
N.~Touzi and N.~Vieille.
\newblock Continuous-time {D}ynkin games with mixed strategies.
\newblock \emph{SIAM Journal on Control and Optimization}, 41\penalty0
  (4):\penalty0 1073--1088, 2002.

\bibitem[Wu and Zhang(2020)]{wu2020viscosity}
C.~Wu and J.~Zhang.
\newblock Viscosity solutions to parabolic master equations and
  {M}c{K}ean--{V}lasov {SDE}s with closed-loop controls.
\newblock \emph{The Annals of Applied Probability}, 30\penalty0 (2):\penalty0
  936--986, 2020.

\bibitem[Xu and Zhou(2013)]{xu2013optimal}
Z.~Xu and X.Y. Zhou.
\newblock Optimal stopping under probability distortion.
\newblock \emph{The Annals of Applied Probability}, 23\penalty0 (1):\penalty0
  251--282, 2013.

\bibitem[Yasuda(1985)]{yasuda1985randomized}
M.~Yasuda.
\newblock On a randomized strategy in {N}eveu's stopping problem.
\newblock \emph{Stochastic Processes and their Applications}, 21\penalty0
  (1):\penalty0 159--166, 1985.

\end{thebibliography}
\end{document}